\documentclass[12pt, twoside]{article}
\usepackage{amsfonts}
\usepackage{xcolor}
\usepackage{mathrsfs}
\usepackage[all,cmtip]{xy}
\usepackage{amsmath}
\usepackage{amssymb,amsthm,upref,amscd}
\usepackage{setspace}
\usepackage{enumerate}
\usepackage[titletoc]{appendix}
\usepackage{times}
\usepackage{cite}
\usepackage{tikz}
\usepackage{authblk}
\usepackage[colorinlistoftodos]{todonotes}
\usetikzlibrary{arrows}
\numberwithin{equation}{section}

\pagestyle{myheadings}
\def\titlerunning#1{\gdef\titrun{#1}}
\makeatletter
\def\author#1{\gdef\autrun{\def\and{\unskip, }#1}\gdef\@author{#1}}

\makeatother

\def\subjclass#1{{\renewcommand{\thefootnote}{}%
\footnote{\emph{Mathematics Subject Classification (2010):} #1}}}
\def\keywords#1{\par\medskip
\noindent\textbf{Keywords.} #1}

\allowdisplaybreaks

\theoremstyle{plain}
\newtheorem{Thm}{Theorem}[section]
\newtheorem{Lem}[Thm]{Lemma}
\newtheorem{claim}{Claim}[section]
\newtheorem{Cor}[Thm]{Corollary}
\newtheorem{Prop}[Thm]{Proposition}
\newtheorem*{Thm*}{Theorem}
\newtheorem*{claim*}{Claim}
\theoremstyle{definition}

\newtheorem*{Def*}{Definition}
\newtheorem*{Cor*}{Corollary}
\newtheorem{Rem}[Thm]{Remark}

\newtheorem{Ques}{Question}

\newcommand{\equ}{equation}
\newcommand{\C}{\mathbb{C}}
\newcommand{\N}{\mathbb{N}}
\newcommand{\R}{\mathbb{R}}
\newcommand{\Z}{\mathbb{Z}}

 \DeclareMathOperator{\dist}{dist}
 
\DeclareMathOperator{\meas}{meas}
\DeclareMathOperator{\supp}{supp}

\DeclareMathOperator{\vol}{vol}

\DeclareMathOperator{\spann}{span}

\newcommand\loc{\mathrm{loc}}
\newcommand\eps{\varepsilon}

\let\nhatoksa=\theenumi
\let\nhatoksb=\labelenumi
\let\nhatoksc=\theenumii
\let\nhatoksd=\labelenumii
\newlength{\nhalengtha}
\setlength{\nhalengtha}{\leftmargini}
\newlength{\nhalengthb}
\setlength{\nhalengthb}{\leftmarginii}
\newlength{\nhalengthc}
\setlength{\nhalengthc}{\labelwidth}

\newcommand{\resetenum}{
\let\theenumi=\nhatoksa
\let\labelenumi=\nhatoksb
\let\theenumii=\nhatoksc
\let\labelenumii=\nhatoksd
\setlength{\leftmargini}{\nhalengtha}
\setlength{\leftmarginii}{\nhalengthb}
\setlength{\labelwidth}{\nhalengthc}
}

\newcommand\cc{\mathcal{C}}

\newcommand\cg{\mathcal{G}}
\newcommand\ch{\mathcal{H}}

\newcommand\ck{\mathcal{K}}
\newcommand\cl{\mathcal{L}}

\newcommand\co{\mathcal{O}}

\newcommand\cs{\mathcal{S}}

\newcommand\cw{\mathcal{W}}

\def\mbs{\mathbb{S}}

\def\msh{\mathscr{H}}

\def\msn{\mathscr{N}}

\def\ig{\textit{g}}

\def\ov{\overline}

\def\pa {\partial}

\def\op{\oplus}

\def\De{\Delta}

\def\ka {\kappa}
\def\al {\alpha}
\def\bt {\beta}
\def\de {\delta}
\def\Ga {\Gamma}
\def\ga {\gamma}
\def\lm {\lambda}
\def\Lam {\Lambda}
\def\om {\omega}
\def\Om {\Omega}
\def\sa {\sigma}
\def\vr {\varepsilon}
\def\va {\varphi}

\def\span{\hbox{span}}

\def\meas{\hbox{\it meas}}

\def\vol{\mathrm{vol}}
\def\real{\hbox{Re}}

\newcommand{\inp}[2]{\left\langle#1,#2\right\rangle}

\frenchspacing

\textwidth=16cm
\textheight=23cm
\parindent=16pt
\oddsidemargin=-0.5cm
\evensidemargin=-0.5cm
\topmargin=-0.5cm

\begin{document}

\baselineskip=17pt

\titlerunning{Conformal embeddings of $S^2\to\R^3$ with prescribed mean curvature: A variational approach}

\title{   \vspace{-3em}
Conformal embeddings of $S^2\to\R^3$ with prescribed mean curvature: A variational approach}

\author{Tian Xu\footnote{\noindent
 Supported by the National Science Foundation of China (NSFC 11601370, 11771325)
and the Alexander von
\newline \hspace*{1.5em}
Humboldt Foundation of Germany}  \vspace{-2em}
}

\date{}

\maketitle

\subjclass{Primary 53C27; Secondary 35R01}

\begin{abstract}

Motivated by recent progress on a spinorial analogue of the Yamabe problem in the geometric literature, we study a conformally invariant spinor field equation on the $m$-sphere, $m\geq2$. Via variational methods and the spinorial Weierstra\ss\ representation, we study the problem of prescribing mean curvature for the immersion $S^2\to\R^3$.

\keywords{Dirac equations, Conformal geometry,
Blow-up}

\end{abstract}


\section{Introduction}

One of the fundamental topics in classical differential geometry concerns the question of whether a smooth Riemannian manifold can be isometrically immersed/embedded into Euclidean spaces. It has been conjectured by Schl\"afli in 1873 that every $m$-dimensional smooth Riemannian manifold $(M^m,\ig)$ admits a local isometric embedding into $\R^{N_m}$, with $N_m=\frac{m(m+1)}{2}$. Several important achievements have been made towards this problem in the last century. In one of his outstanding papers, J. Nash \cite{Nash} showed the existence of a global isometric embedding of any $m$-dimensional smooth Riemannian manifold in $\R^{N}$ with $N=3N_m+4m$ in the case of compact $M$ and $N=(m+1)(3N_m+4m)$ in the case of non-compact $M$. Following Nash, we mention the book "Partial Differential Relations" published in 1986 by Gromov, which contains various problems related to the isometric embedding of Riemannian manifolds. Furthermore, in this book, Gromov showed that $N=N_m+2m+3$ is enough for the compact case.

In low dimensions, if one asks the more specific question of whether a given $2$-dimensional Riemann surface can be isometrically immersed/embedded into Euclidean $3$-space, not too much is known. Besides some general local existence results (see for example \cite{Spivak, Lin} for real analytic metrics and for smooth metrics under suitable curvature assumptions), non-existence results seem to be easier to come by: for instance, Hilbert's classical result that the hyperbolic plane does not admit an isometric immersion into $\R^3$, and the fact that a compact non-positively curved Riemann surface cannot be isometrically immersed into $\R^3$.

In the classical differential geometry of surfaces, the Gau\ss-Codazzi-Mainardi equations consist of a pair of related equations, and it incorporates the extrinsic curvature (or mean curvature) of the surface. The equations are precisely the integrability conditions to obtain an immersed surface in $\R^3$ (locally or on a simply connected domain).  With the development of Spin Geometry and the theory of Dirac operators, an equivalent albeit simpler expression becomes well-known, namely the existence of a special spinor field called {\it generalized Killing spinor field} (see for instance \cite{KS, Roth, Taimanov1, Taimanov2, Taimanov3} and references therein). This expression is
the so-called \textit{spinorial Weierstra\ss\ representation}, and can be understood as a geometrically invariant way for the representation of surfaces in Euclidean $3$-space.

In late 1990s, T. Friedrich has written a
very nice article in which the spinorial Weierstra\ss\
representation is beautifully explained,
see \cite[Theorem 13]{Friedrich1998}. Eventually, it turns out that there exists a one-to-one correspondence between the existence
of an isometric immersion $(\widetilde M^2,\ig)\hookrightarrow(\R^3,\ig_{\R^3})$
with mean curvature $H$ and the existence of a spinor field $\va$ with constant length satisfying the following Dirac equation
\begin{\equ}\label{linear Dirac}
D_\ig\va=H\va, \quad |\va|_\ig\equiv1
\end{\equ}
where $\widetilde M^2$ is the universal covering of $M^2$,  $D_\ig$
and $|\cdot|_\ig$ are respectively the Dirac operator and the  hermitian metric on the spinor bundle $\mbs(M^2)$ over $(M^2,\ig)$.

Following this idea, the situation is much clearer for simply connected surfaces. And one may naturally ask the following question:
\begin{Ques}\label{Q1}\vspace{-0.5em}
{\it Given an arbitrary smooth function $H: S^2\to\R$, what is the condition on $H$ so that there is an isometric immersion $S^2\hookrightarrow\R^3$ realizing $H$ as its mean curvature?}
\end{Ques}\vspace{-0.5em}

\noindent
Or it is equivalent to consider the following question via the spinorial Weierstra\ss\ representation:

\begin{Ques}\label{Q2}\vspace{-0.5em}
{\it What are necessary and sufficient conditions on $H$ so that there is a solution to Eq. \eqref{linear Dirac}?}
\end{Ques}

It seems interesting to see that Question \ref{Q2} can be investigated via a conformal change of Eq. \eqref{linear Dirac}. That is, we may begin with the standard Riemann sphere $(S^2,\ig_{S^2})$ and consider the following nonlinear
partial differential equation involving the Dirac operator $D_{\ig_{S^2}}$:
\begin{\equ}\label{Dirac2}
D_{\ig_{S^2}}\psi=H|\psi|_{\ig_{S^2}}^2\psi \quad \text{on }
(S^2,\ig_{S^2}).
\end{\equ}
Suppose we have a spinor field $\psi$ that satisfies Eq. \eqref{Dirac2}
and that never vanishes. Then one may
introduce a conformal metric $\ig=|\psi|_{\ig_{S^2}}^4\ig_{S^2}$
on $S^2$. Through the well-known formula for the Dirac operator under a conformal change of the metrics (see for instance \cite[Proposition 1.3.10]{Ginoux} and Proposition \ref{conformal formula} below), we obtain a new spinor field $\va:=\iota(|\psi|_{\ig_{S^2}}^{-1}\psi)$ which satisfies
\[
D_{\ig}\va=H\va, \quad |\va|_{\ig}\equiv1,
\]
where $\iota:(\mbs(S^2),\ig_{S^2})\to(\mbs(S^2),\ig)$
is a fiberwise isometry. And thus we find an
isometric immersion $(S^2,\ig)\hookrightarrow(\R^3,\ig_{\R^3})$ that
realizes $H$ as prescribed mean curvature.

In this paper, we will make use of the above spinorial tool in the study
of Question \ref{Q2}. From this point of view,
it seems natural to consider a two-step procedure:
\begin{itemize}
\item find a non-trivial spinor field $\psi$ satisfying
Eq. \eqref{Dirac2},

\item  show that the zero set under
$|\psi|_{\ig_{S^2}}: S^2\to[0,\infty)$ is empty, i.e. $\psi^{-1}(0)=\emptyset$.
\end{itemize}
However this is not an easy task. In general, even if one already knows there exists a non-trivial solution $\psi$ to Eq. \eqref{Dirac2}, it is still difficult to know whether $\psi$ possesses zeros. This is due to the lack of an adequate replacement for the maximum principle developed for second order elliptic partial differential operators.

Let us return to the problem of finding isometric immersions/embeddings of surfaces. The spinorial Weierstra\ss\
representation has transformed the problem into the equivalent problem which amount to solve a PDE on the spinor bundle over the surface. However, still, the picture is not completely clear for immersed surfaces. A further question besides Question \ref{Q1} and \ref{Q2} is the following:

\begin{Ques}\label{Q3}\vspace{-0.5em}
{\it Assume that a function $H:S^2\to\R$ is given such that it is the mean curvature of an immersion. Under which condition is this immersion an embedding? And when does it have self-intersections?}
\end{Ques}

In this paper, we are interested in answering Questions \ref{Q1}-\ref{Q3}. In order to explain our results in more detail, we begin with the following theorem, which provides a description of the zero set (or nodal set) of a solution to \eqref{Dirac2}.

\begin{Thm}\label{main theorem3}
On a closed spin surface $(M^2,\ig)$
of genus $\ga$, suppose that $H: M^2\to(0,\infty)$
is a smooth function. Let $\psi$ be a solution of the equation
\begin{\equ}\label{Dirac3}
D_\ig\psi=H|\psi|_{\ig}^2\psi \quad \text{on } M^2.
\end{\equ}
Then the number of zeros of $\psi$ is at most
\[
\ga-1+\frac{\int_{M^2}H^2|\psi|_\ig^4d\vol_{\ig}}{4\pi}.
\]
\end{Thm}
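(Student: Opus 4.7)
The plan is to combine the conformal covariance of the Dirac operator and the Spinorial Weierstrass representation (both recalled in the introduction) with the Gauss--Bonnet formula for Riemannian metrics with conical singularities.

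First I would analyze the zero set $Z := \psi^{-1}(0)$. Elliptic regularity gives $\psi \in C^\infty$, and a Hartman--Wintner-type expansion near $p \in Z$ --- comparing $D\psi = Q|\psi|^2\psi$ with the constant-coefficient flat Dirac operator in an isothermal chart, whose homogeneous kernel in dimension two consists of the monomials $z^n$ and $\bar z^n$ on the two half-spinor summands --- should yield a well-defined positive integer multiplicity $n_p \geq 1$ at each $p \in Z$ together with the asymptotic $|\psi|^2 = C_p|z|^{2n_p}(1 + o(1))$, $C_p > 0$, in local isothermal coordinates $z$ centered at $p$. In particular $Z$ is finite, and the quantity to bound is $N := \sum_{p\in Z} n_p$, the number of zeros counted with multiplicity.

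Second, on $M^2 \setminus Z$ I would pass to the conformal metric $\tilde\ig := |\psi|^4 \ig$. Applying Proposition~\ref{conformal formula} to $\tilde\psi := F(\psi/|\psi|)$ gives $\tilde D\tilde\psi = Q\tilde\psi$ with $|\tilde\psi|_{\tilde\ig} \equiv 1$. The Spinorial Weierstrass representation then produces, around every point of $M^2 \setminus Z$, a local isometric immersion into $\R^3$ with mean curvature $Q$; the Theorema Egregium together with $k_1 k_2 \leq \tfrac14(k_1+k_2)^2$ yields the pointwise bound $K_{\tilde\ig} \leq Q^2$ on $M^2 \setminus Z$. Moreover, the asymptotic $|\psi|^4 \sim C_p^2 |z|^{4n_p}$ and the substitution $w = z^{2n_p+1}/(2n_p+1)$ (under which $|dw|^2 = |z|^{4n_p}|dz|^2$, while a single loop around $z = 0$ wraps $2n_p + 1$ times around $w = 0$) identify $\tilde\ig$ as a metric on $M^2$ with conical singularities of total angle $\alpha_p = 2\pi(2n_p + 1)$ at each $p \in Z$.

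The classical Gauss--Bonnet theorem for such conical metrics then gives
\[
\int_{M^2 \setminus Z} K_{\tilde\ig}\, d\vol_{\tilde\ig} \;=\; 2\pi\chi(M^2) + \sum_{p \in Z}(\alpha_p - 2\pi) \;=\; 4\pi(1 - \ga) + 4\pi N,
\]
and integrating $K_{\tilde\ig} \leq Q^2$ against $d\vol_{\tilde\ig} = |\psi|^4\, d\vol_{\ig}$ produces
\[
4\pi(1 - \ga) + 4\pi N \;\leq\; \int_{M^2} Q^2 |\psi|^4\, d\vol_{\ig},
\]
which is the claimed bound after division by $4\pi$.

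The main obstacle is Step~1: the nonlinearity $Q|\psi|^2\psi$ vanishes like $|\psi|^3$ at a zero, so while Aronszajn-type unique continuation easily rules out infinite-order vanishing, one still needs a careful perturbative argument around the constant-coefficient flat Dirac operator to conclude that the leading term of $\psi$ is a genuine homogeneous flat-harmonic spinor --- thereby fixing $n_p$ as a positive integer with the clean asymptotic $|\psi|^2 \sim C_p|z|^{2n_p}$. Once that local structure is secured, the cone-angle identification, Gauss--Bonnet with conical singularities, and the inequality $K \leq H^2$ for any surface in $\R^3$ are standard.
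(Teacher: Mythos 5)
Your proof is correct and follows essentially the same route as the paper's own argument (Proposition \ref{nodal prop}): pass to the conformal metric $|\psi|^4\ig$, bound its Gauss curvature by $Q^2$, account for each zero through its vanishing order (cone angle $2\pi(2n_p+1)$, equivalently geodesic-curvature contribution close to $-2\pi(2n_p+1)$ on small circles), and conclude with Gauss--Bonnet. The only substantive difference is that you get $K_{\tilde\ig}\leq Q^2$ extrinsically, via the local Spinorial Weierstra\ss\ immersion and $k_1k_2\leq\tfrac14(k_1+k_2)^2$, while the paper proves it intrinsically by a Schr\"odinger--Lichnerowicz computation (Lemma \ref{Scal lemma}); both are valid, and your Step 1 on the homogeneous flat-harmonic leading term is precisely the B\"ar-type local structure the paper also relies on when it takes $n_j$ to be the order of the first non-vanishing Taylor term.
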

\begin{Rem}
We note that, by the regularity arguments in \cite{Isobe-JFA}
(see also \cite{Wang}),
a weak solution to Eq. \eqref{Dirac3}
is smooth provided that $H$ is smooth. Then it follows from
B\"ar's theorem (cf. \cite{Bar1997}) that
the zero set of $\psi$ should be discrete.
Theorem \ref{main theorem3} may be used to derive upper estimates for the number of nodal points. In order to explain this we write $H_{max}=\max_{M^2} H$ and we suppose that $\psi$ satisfies the energy estimate
\begin{\equ}\label{energy00}
\int_{M^2}H|\psi|_\ig^4d\vol_\ig<\frac{8\pi}{H_{max}}
\end{\equ}
and $M^2$ has genus $0$ (i.e. $M^2$ is topologically a sphere), then we have
\[
\#\psi^{-1}(0)\leq -1+\frac{\int_{M^2}H^2|\psi|_\ig^4d\vol_{\ig}}{4\pi}
\leq
-1+\frac{H_{max}\int_{M^2}H|\psi|_\ig^4d\vol_{\ig}}{4\pi}<1.
\]
And thus $\psi$ will have no zero at all.
\end{Rem}

After having studied the zero set, we discuss the existence of a (weak) solution to \eqref{Dirac3} on the $2$-sphere in the next theorem. In order to state the result, we introduce the following notation: For a function $H\in C^2(S^2)$, we set
$H_{max}:=\max_{\xi\in S^2}H(\xi)$ and $H_{min}:=\min_{\xi\in S^2}H(\xi)$. We then denote the set of maximum points as $\ch:=\big\{\xi\in S^2:\, H(\xi)=H_{max} \big\}$,
and $\ch_\de:=\big\{ \xi\in S^2:\, \dist_{\ig_{S^2}}(\xi,\ch)<\de \big\}$ its $\de$-neighborhood for $\de>0$.

Our criteria of the function $H$ will be formulated as
\begin{itemize}
\item[$(H)$] {\it $\ch$ is not contractible in its $\de$-neighborhood $\ch_\de$, for some small $\de>0$, but there exits $d\in\big(
    \max\big\{\frac12 H_{max}, \, H_{min}\big\},
    H_{max} \big)$  such that $\ch$ is contractible in
    $\big\{ \xi\in S^2:\, H(\xi)\geq d \big\}$.
    There is no critical value of $H$ in the interval $(d,H_{max})$, and
    if $\xi\in S^2$ is a critical point of $H$ with $H(\xi)=d$ then the Hessian of $H$ at $\xi$ is positive definite.}
\end{itemize}
And then, combining Theorem \ref{main theorem3}, the result is as follows

\begin{Thm}\label{main theorem1}
If $H: S^2\to(0,\infty)$ is a $C^2$ function satisfying condition $(H)$, then there is a solution $\psi$ to Eq. \eqref{Dirac2} with the energy estimate
\[
\frac{4\pi}{H_{max}}<\int_{S^2}H|\psi|_{\ig_{S^2}}^4d\vol_{\ig_{S^2}}<\frac{8\pi}{H_{max}}.
\]
Particularly, if $H$ is smooth, then the solution $\psi$ has no zero at all, i.e. the nodal set $\psi^{-1}(0)$ is empty.
\end{Thm}

\begin{Rem}\label{willmore remark}
By virtue of the spinorial Weierstra\ss\ representation, Theorem \ref{main theorem1} implies the existence of an isometric immersion $\Pi: (S^2,|\psi|_{\ig_{S^2}}^4\ig_{S^2})\to(\R^3,\ig_{\R^3})$.
In particular, the pull-back of the Euclidean volume form under this
immersion will be $\Pi^*(d\mu)=|\psi|_{\ig_{S^2}}^4 d\vol_{\ig_{S^2}}$. Notice that the Willmore functional for this immersion $\Pi$, denoted by $\cw(\Pi)$, appears as the integral squared norm of the mean curvature $H$. Hence, we have the estimate
\[
\cw(\Pi)=
\int_{S^2}H^2|\psi|_{\ig_{S^2}}^4 d\vol_{\ig_{S^2}}
\leq H_{max}\int_{S^2}H|\psi|_{\ig_{S^2}}^4 d\vol_{\ig_{S^2}}
<8\pi.
\]
Due to Li-Yau's inequality \cite[Theorem 6]{LY}, the immersion
$\Pi$ covers points in $\R^3$ at most once.
And thus $\Pi$ is in fact an isometric embedding.
\end{Rem}

\begin{figure}[ht]
\centering
\scalebox{0.5}[0.5]
 { \includegraphics{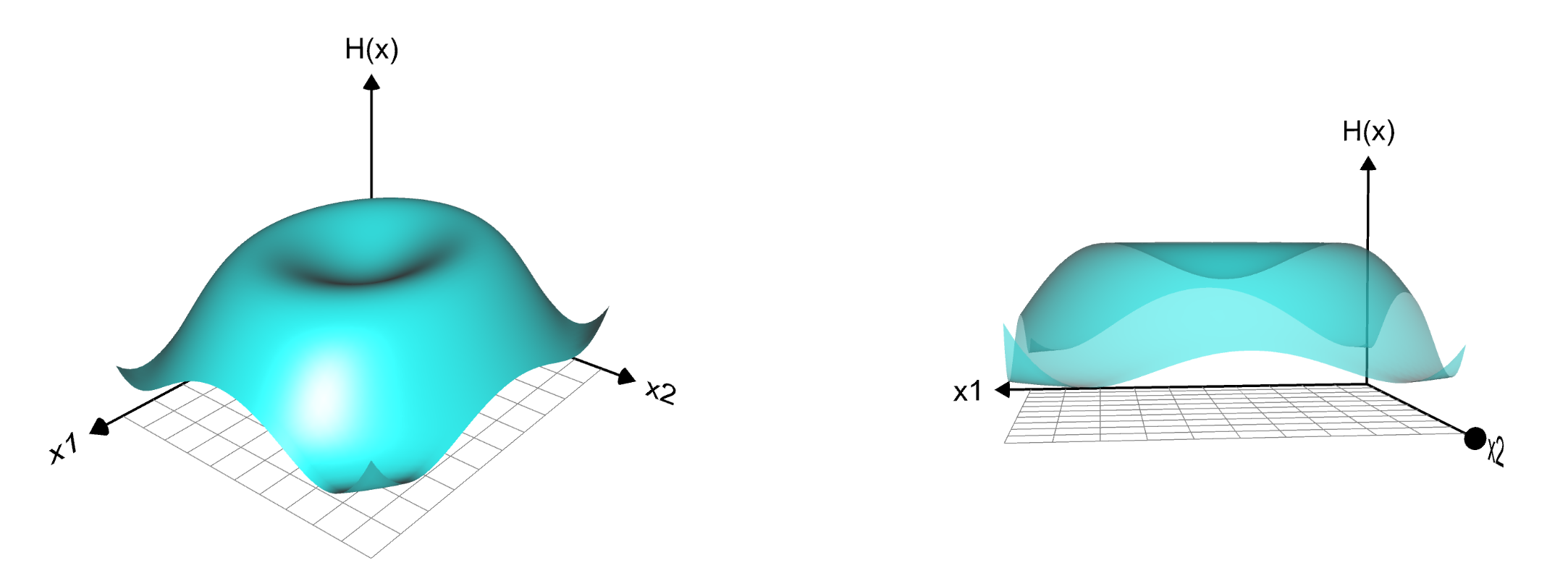}}
 \caption{A possible shape of function $H$'s, in local coordinates, which satisfies the criteria $(H)$.}
\end{figure}

We note that the class of functions $H$ that satisfy the hypothesis $(H)$ is dense, in $C^1$-topology, in the space of positive smooth functions. Then Theorem \ref{main theorem1} and Remark \ref{willmore remark} together immediately imply the next corollary,
which provides an answer to Question \ref{Q1}-\ref{Q3}. For ease of notation, we will denote
$[\ig_{S^2}]=\big\{
f^2\ig_{S^2}: \, f\in C^\infty(S^2), f>0\big\}$
the standard conformal class of $\ig_{S^2}$.

\begin{Cor}\label{main theorem2}
Let $C_+^\infty$ denote the class of positive smooth
functions on $S^2$ and let
\[
\msh=\big\{H\in C_+^\infty: \,
H\equiv constant \text{ or }
H \text{ satisfies condition } (H) \big\}.
\]
Then $\msh$ has the following properties:
\begin{itemize}
\item[$(1)$] $\msh$ is dense in $C_+^\infty$ with respect
to the $C^1$-topology;

\item[$(2)$] for any $H\in\msh$, there exists a
conformal metric $\ig_H\in[\ig_{S^2}]$ and an
isometric immersion
$\Pi_H:(S^2,\ig_H)\to(\R^3,\ig_{\R^3})$ realizing
$H$ as its mean curvature;

\item[$(3)$] the Willmore energy of $\Pi_H$ satisfies
\[
\cw(\Pi_H)=\int_{S^2}H^2d\vol_{\ig_H}<8\pi,
\]
and thus $\Pi_H$ is an isometric embedding.
\end{itemize}
\end{Cor}

We point out that, in \cite{Isobe-MathAnn}, T. Isobe also considered the Question \ref{Q1} and \ref{Q2}. He proved that when the function $H$ is very close to a constant, that is $H(\xi)=1+\vr Q(\xi)$ on $S^2$ with $Q$ being a specific Morse function and $\vr>0$ is small, then there exists a branched immersion $\Pi_{\vr,Q}:S^2\to\R^3$ that realizes its mean curvature as $1+\vr Q$. Here, by branched immersion we mean that $\Pi_{\vr,Q}$ is an immersion except at a discrete set of points. The method in \cite{Isobe-MathAnn} is also based on the spinorial Weierstra\ss\ representation and the nonlinear Dirac equation \eqref{Dirac2}.

Another result concerning this problem is due to M. Anderson \cite{Anderson}. He proved that for any positive function $H : S^2\to (0,\infty)$ in the class $C^{k,\al}$,
$k\geq0$ and $\al\in(0,1)$, there exists a branched immersion
$\Pi: S^2 \to \R^3$ and an positive affine function $\ell$ on $S^2$ such that the mean curvature of $\Pi$ is prescribed by $H+\ell$. In this setting the function $H>0$ lies arbitrarily in the class $C^{k,\al}$, and thus it can be far away from a constant. However, the affine function $\ell$ cannot be prescribed.

Note that, in Corollary \ref{main theorem2}, the mean curvature is accurately prescribed by the functions in the class $\msh$ and, particularly, the presence of branch points is ruled out. Hence, comparing with \cite{Isobe-MathAnn, Anderson}, our approach is in a different perspective. The class $\msh$ has its generality in the sense that only a "volcano"-shaped assumption is required and, furthermore, such a class of functions can give an explicit estimate of the Willmore functional from above. However, Corollary \ref{main theorem2} can not cover any of the results in \cite{Isobe-MathAnn, Anderson} but has intersection with them. There are many functions lying outside $\msh$ that can be prescribed as mean curvature of certain conformal isometric embeddings. However, there are still several problems that have to be solved.

Last but not least, we want to remind the readers that Eq. \eqref{Dirac2} for an arbitrarily given $H$ does not necessarily admit a non-trivial solution. As was shown in \cite{AHA}, there is an obstruction to the existence of a non-trivial solution to Eq. \eqref{Dirac2}. In fact, if there exists a non-trivial solution $\psi$ to Eq. \eqref{Dirac2}, then $H$ must satisfy
\begin{\equ}\label{obs}
\int_{S^2}(\nabla_X H)|\psi|_{\ig_{S^2}}^4d\vol_{\ig_{S^2}}=0
\end{\equ}
for any conformal vector field $X$ on $S^2$. In particular, $\nabla_X H$ must change sign on $S^2$. The obstruction \eqref{obs} is quite similar to the Kazdan-Warner obstruction for Gaussian and scalar curvature equations (see \cite{KW}).

Let us give an overview of our paper as follows.  We will consider the following semilinear Dirac equation with a critical exponent in all dimensions:
\begin{\equ}\label{Dirac m}
D_{\ig_{S^m}}\psi=H|\psi|_{\ig_{S^m}}^{\frac2{m-1}}\psi
\quad \text{on } (S^m,\ig_{S^m})
\end{\equ}
under the generalized assumption that $H\in C^2(S^m)$ satisfies
\begin{itemize}
\item[$(H*)$] {\it $\ch$ is not contractible in its $\de$-neighborhood $\ch_\de$, for some small $\de>0$, but there exits $d\in\big(
    \max\big\{2^{-\frac1{m-1}}H_{max}, \, H_{min}\big\},
    H_{max} \big)$  such that $\ch$ is contractible in
   $\big\{ \xi\in S^m:\, H(\xi)\geq d \big\}$.
    There is no critical value of $H$ in the interval $(d,H_{max})$,
    and if $\xi\in S^m$ is a critical point of $H$ with
    $H(\xi)=d$ then the Hessian of $H$ at $\xi$ is positive definite.}
\end{itemize}
Here, analogous to the $2$-sphere case, $H_{max}:=\max_{\xi\in S^m}H(\xi)$ and $H_{min}:=\min_{\xi\in S^m}H(\xi)$,  $\ch:=\big\{\xi\in S^m:\, H(\xi)=H_{max} \big\}$ is the collection of maximum points and $\ch_\de:=\big\{ \xi\in S^m:\, \dist_{\ig_{S^m}}(\xi,\ch)<\de \big\}$ is its $\de$-neighborhood for $\de>0$. Inspired by the idea of Yamabe \cite{Yamabe} and Aubin \cite{Aubin} when they deal with the Yamabe problem, here we mainly focus on a specific regularization of Eq. \eqref{Dirac m}, namely, a family of related equations with subcritical exponent
\begin{\equ}\label{Dirac p}
D_{\ig_{S^m}}\psi=H|\psi|_{\ig_{S^m}}^{p-2}\psi
\quad \text{on } (S^m,\ig_{S^m})
\end{\equ}
where $p\in(2,\frac{2m}{m-1})$. If one considers the variational formulation where solutions of Eq. \eqref{Dirac p} are obtained as critical points of the energy functional
\[
\cl_p(\psi):=\frac12\int_{S^m}(D\psi,\psi)d\vol_{\ig_{S^m}}
-\frac1p\int_{S^m} H|\psi|^{p}
d\vol_{\ig_{S^m}},
\]
then the main ingredient of this paper lies in the compactness of a family of critical points $\psi_p$ as $p\to\frac{2m}{m-1}$ of $\cl_p$. One way to show the compactness is to apply the regularity arguments to get uniform $C^{1,\al}$-boundedness for $\psi_p$, and hence by Arzel\`a-Ascoli theorem, up to a subsequence, the spinor field $\psi_p$ converges to a solution of Eq. \eqref{Dirac m}. This has been shown when the energy  $\cl_p(\psi_p)$ is below a so-called blow-up threshold (see \cite{Ammann, Raulot}). However, in our case, one must allow for blow-up phenomenon, and the situation becomes much more complicated. Whether or not one can obtain a uniform $C^{1,\al}$-boundedness for these $\psi_p$ is not clear. Here our strategy is to use build a concentration-compactness alternative for the sequence $\{\psi_p\}$. To rule out the blow-up phenomenon, one only needs to clarify that concentration will not occur, and this will be done in Section \ref{existence}.

From the view point of variational calculus, there are at least three different kinds of analytical difficulties when carrying out the above program. The first one is due to the strong indefiniteness of the functional $\cl_p$. It is well-known that the spectrum of the Dirac operator $D_{\ig_{S^m}}$ is neither bounded from above nor from below. Hence, a critical point of $\cl_p$ has infinite Morse index and infinite co-index. This will cause a problem since the classical variational methods such as the minimizing technique, Morse theory and its variants do not directly apply. The second difficulty is due to the existence of multiple solutions to the subcritical equation \eqref{Dirac p}. Note that Eq. \eqref{Dirac p} is invariant under the canonical action of $S^1=\{e^{i\theta}\in\C:\,\theta\in[0,2\pi]\}$ on spinors (i.e. if $\psi$ is a solution of Eq. \eqref{Dirac p} then $e^{i\theta}\psi$ is also a solution, for every fixed $\theta$). Moreover, for the case $m\equiv2,3,4$ (mod $8$), the spinor bundle has a quaternionic structure which commutes with Clifford multiplication, see for instance the construction in \cite[Section 1.7]{Friedrich} or \cite[Page 33, Table III]{Lawson}. Thus in these cases, Eq. \eqref{Dirac p} is invariant under the canonical action of the unit quaternions $S^3=\{q\in\mathbb{H}:\, |q|=1\}$ on spinors. And then one may be interested in the existence of $\cg$-inequivalent solutions of Eq. \eqref{Dirac p}, where $\cg=S^1$ or $S^3$. Standard variational arguments (see e.g. \cite[Chapter 2 Section 5]{Struwe}) show that the symmetric property of Eq. \eqref{Dirac p} under $\cg$-actions ensures multiple $\cg$-orbits of solutions. It can be seen from Remark \ref{ground state remark} that, among all these $\cg$-orbits of solutions, the ground state energy solutions should blow up when $p\to\frac{2m}{m-1}$. Whether or not there exists a sequence of solutions $\{\psi_p\}$ which shows compactness when $p$ approaches to $\frac{2m}{m-1}$ becomes a problem. Another difficulty lies in the analysis of the zero sets for a solution to Eq. \eqref{Dirac m}. Since the equation is defined on the spinor bundle, which is a complex vector bundle, in contrast to the Laplacian on functions, we do not have a maximum principle in order to exclude zeros for eigenspinors.

In what follows, the proof will be carried out as follows: in Section~\ref{preliminaries}, we introduce some notation and recall some known facts about the Dirac operator and blow-up asymptotic behavior of Eq. \eqref{Dirac p}; the analysis of zero sets for a solution to \eqref{Dirac m} on surfaces will be given in Section~\ref{nodal sec}; the detailed proof of Theorem \ref{main theorem1} and Corollary \ref{main theorem2} are given in Section~ \ref{existence}.


\section{Notation, definitions and known results}\label{preliminaries}

\subsection{The Dirac operator and $H^{\frac12}$-spinors}
In general, let $(M,\ig,\sa)$ be an $m$-dimensional compact spin manifold, where $\ig$ is a Riemannian metric on $M$, $\sa: P_{Spin}(M)\to P_{SO}(M)$ is a spin structure on $M$. We denote $\mbs(M)=P_{Spin}(M)\times_\rho\mbs_m$ the spinor bundle equipped with a natural hermitian inner product $(\cdot,\cdot)$. This is a complex vector bundle associated to the principle bundle $P_{Spin}(M)$ via the fundamental spin representation $\rho:Spin(m)\to End(\mbs_m)$. On the spinor bundle $\mbs(M)$, the Dirac operator
is then defined as the composition
\begin{displaymath}
\xymatrixcolsep{1.6pc}\xymatrix{
D_\ig:\Ga(\mbs(M)) \ar[r]^-{\nabla^\mbs} &
\Ga(T^*M\otimes \mbs(M)) \ar[r] &
\Ga(TM\otimes \mbs(M)) \ar[r]^-{\mathfrak{m}} &
\Ga(\mbs(M))}
\end{displaymath}
where $\nabla^\mbs$ is the canonical lift of the Levi-Civita connection on $P_{SO}(M)$ via the double covering $P_{Spin}(M)\to P_{SO}(M)$ and $\mathfrak{m}$ denotes the Clifford multiplication
$\mathfrak{m}: X\otimes\psi\mapsto X\cdot\psi$.

The Dirac operator behaves very nicely under conformal changes in the following sense, which was already known to Hitchin \cite{Hit74} and which was worked out in detail in \cite{Hij86}:
\begin{Prop}\label{conformal formula}
Let $\ig_0$ and $\ig=f^2\ig_0$ be two conformal metrics on
a Riemannian spin manifold $M$. Then, there exists an
isomorphism of vector bundles $\iota:\, \mbs(M,\ig_0)\to
\mbs(M,\ig)$ which is a fiberwise isometry such that
\[
D_\ig\big( \iota(\psi) \big)=\iota\big( f^{-\frac{m+1}2}D_{\ig_0}
\big( f^{\frac{m-1}2}\psi \big)\big),
\]
where $\mbs(M,\ig_0)$ and $\mbs(M,\ig)$ are spinor
bundles on $M$ with respect to the metrics $\ig_0$ and
$\ig$, respectively, and $D_{\ig_0}$ and $D_\ig$ are
the associated Dirac operators.
\end{Prop}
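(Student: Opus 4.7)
The plan is to prove this well-known conformal covariance formula by explicit local computation in a frame, since the formula is multiplicative/pointwise in character once the correct identification of spinor bundles is set up. First I would construct the fiberwise isometry $F$. Given any local $\ig_0$-orthonormal frame $\{e_i\}$ on an open set $U\subset M$, the rescaled frame $\bar e_i := f^{-1} e_i$ is $\ig$-orthonormal. Because $\ig$ and $\ig_0$ induce the same orientation, this gives a canonical $SO(m)$-equivariant bundle isomorphism $P_{SO}(M,\ig_0)\to P_{SO}(M,\ig)$, which lifts uniquely to a $Spin(m)$-equivariant isomorphism of the spin principal bundles and therefore to the required bundle isomorphism $F:\mbs(M,\ig_0)\to\mbs(M,\ig)$. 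By construction $F$ is a fiberwise $\C$-linear isometry and is compatible with Clifford multiplication in the sense that $F(X\cdot_{\ig_0}\psi) = X\cdot_\ig F(\psi)$ for $X\in TM$, once one keeps in mind that the Clifford relation is metric-dependent, so the representation of $X$ on the $\ig$-side absorbs a factor of $f$.

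Next I would compare the two Levi-Civita connections via the Koszul formula, which gives the standard identity
\[
\bar\nabla_X Y = \nabla_X Y + X(\ln f)\, Y + Y(\ln f)\, X - \ig_0(X,Y)\,\grad_{\ig_0}(\ln f),
\]
where $\bar\nabla$ is the Levi-Civita connection of $\ig$. Lifting this to the spin bundles, a direct computation with the connection $1$-forms (using $\bar e_i = f^{-1}e_i$ to relate the two local connection matrices) yields the standard formula
\[
\bar\nabla^{\mbs}_X F(\psi) \;=\; F\!\Bigl(\nabla^{\mbs}_X \psi - \tfrac{1}{2}X\cdot \grad_{\ig_0}(\ln f)\cdot \psi - \tfrac{1}{2}X(\ln f)\,\psi\Bigr),
\]
at least up to a sign convention fixed by the choice of lift. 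This is the one genuine computation in the proof.

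Given this, the formula for the Dirac operator follows by summing over a local $\ig$-orthonormal frame. Writing $D_\ig F(\psi)=\sum_i \bar e_i\cdot_\ig \bar\nabla^{\mbs}_{\bar e_i}F(\psi)$ and substituting the connection identity above, the sum $\sum_i e_i\cdot X_i$ telescopes because Clifford contractions of the symmetric tensor $\grad_{\ig_0}(\ln f)$ produce a scalar multiple of $\grad_{\ig_0}(\ln f)\cdot\psi$ with an $m$-dependent coefficient. Collecting the terms and pulling the power of $f$ inside yields precisely
\[
D_\ig F(\psi) \;=\; F\!\Bigl(f^{-\frac{m+1}{2}} D_{\ig_0}\bigl(f^{\frac{m-1}{2}}\psi\bigr)\Bigr),
\]
as the factor $f^{(m-1)/2}$ on the inside is chosen exactly so that the lower-order terms from differentiating it cancel against the Clifford-connection correction, leaving only a derivative-free multiplicative rescaling by $f^{-(m+1)/2}$.

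The one step where care is needed, and which I expect to be the only nontrivial obstacle, is bookkeeping: tracking the factors of $f$ that come from four sources at once (the frame rescaling $\bar e_i=f^{-1}e_i$, the metric-dependence of Clifford multiplication, the lift of the connection-form correction, and the choice of exponent $\tfrac{m-1}{2}$) and verifying that the coefficient in front of the $\grad(\ln f)\cdot\psi$ terms is precisely what is needed for the cancellation. Everything else — the existence of $F$, its isometry property, and the pointwise nature of the formula — is formal once the spin lift of the conformal change of frames has been made explicit.
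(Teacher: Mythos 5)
The paper itself contains no proof of Proposition \ref{conformal formula}: it is quoted as a known result, with references to Hitchin and Hijazi, so your proposal is not paralleling an argument in the text but reconstructing the standard proof from those sources (the Bourguignon--Gauduchon frame-rescaling identification of the spinor bundles, the comparison of spin connections, and the summation over an orthonormal frame), and in outline it is correct. Two bookkeeping points should be made precise. First, the Clifford compatibility of $F$ is not $F(X\cdot_{\ig_0}\psi)=X\cdot_{\ig}F(\psi)$ but $F(X\cdot_{\ig_0}\psi)=f^{-1}\,X\cdot_{\ig}F(\psi)$, equivalently $F(e_i\cdot_{\ig_0}\psi)=\bar e_i\cdot_{\ig}F(\psi)$ for the rescaled frame $\bar e_i=f^{-1}e_i$; you acknowledge the absorbed factor of $f$ verbally, but the identity as displayed is the one that must enter the computation, since it is the source of the overall $f^{-1}$ in the Dirac comparison. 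Second, $\grad_{\ig_0}(\ln f)$ is a vector field, not a symmetric tensor, and the ``telescoping'' you invoke is exactly the pair of identities $\sum_i e_i\cdot e_i\cdot\grad_{\ig_0}(\ln f)=-m\,\grad_{\ig_0}(\ln f)$ and $\sum_i e_i(\ln f)\,e_i=\grad_{\ig_0}(\ln f)$, which, inserted into the connection formula you quote (which is the correct one, with $u=\ln f$), give $D_{\ig}F(\psi)=F\bigl(f^{-1}\bigl(D_{\ig_0}\psi+\tfrac{m-1}{2}\grad_{\ig_0}(\ln f)\cdot_{\ig_0}\psi\bigr)\bigr)$; applying this to $f^{-\frac{m-1}{2}}\psi$ in place of $\psi$ (equivalently, inserting the weight $f^{\frac{m-1}{2}}$ as you do) cancels the gradient term by the Leibniz rule $D_{\ig_0}(h\psi)=hD_{\ig_0}\psi+\grad_{\ig_0}h\cdot_{\ig_0}\psi$ and yields exactly the stated identity. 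With these two steps written out, your argument is a complete, self-contained proof of what the paper only cites; the paper's route buys brevity, yours buys an explicit verification of the exponents $\frac{m\pm1}{2}$, which is the only part of the statement that is not purely formal.
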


Let us consider the case $M=S^m$ with the standard metric $\ig_{S^m}$. For simplicity, we denote $D=D_{\ig_{S^m}}$ the associated Dirac operator on the spinor bundle $\mbs(S^m)$. Let $Spec(D)$ be its spectrum. It is well-known that $D$ is essentially self-adjoint in $L^2(S^m,\mbs(S^m))$ and has compact resolvents
(see \cite{Friedrich, Lawson, Ginoux}). Furthermore, we have
\[
Spec(D)=\Big\{\pm\big(\frac{m}2+j\big):\, j=0,1,2,\dots\Big\}
\]
and, for each $j$, the eigenvalues $\pm(\frac{m}2+j)$ have the
multiplicity
\[
2^{[\frac{m}2]}
\begin{pmatrix}
m+j-1\\
j
\end{pmatrix}
\]
(see \cite{Sulanke}). This concludes that the spectrum of the Dirac operator is discrete. For notation convenience, we can write
$Spec(D)=\{\lm_k\}_{k\in\Z\setminus\{0\}}$ with
\[
\lm_k=\frac{k}{|k|}\big(\frac{m}2+|k|-1\big).
\]
An application of the classical spectral theory implies that the eigenspaces of
$D$ form a complete orthonormal decomposition of
$L^{2}(S^m,\mbs(S^m))$, that is,
\[
L^{2}(S^m,\mbs(S^m))=\ov{\bigoplus_{\lm\in Spec(D)} \ker(D-\lm I)}.
\]

Let us denote $m(\lm_k)$ the multiplicity of $\lm_k\in Spec(D)$. It is clear that $L^{2}(S^m,\mbs(S^m))$ possesses a normalized Hilbert basis consisting of eigenspinors of $D$, that is, $\{\eta_{k,j}\}$ for $k\in \Z\setminus\{0\}$ and $1\leq j\leq m(\lm_k)$
with corresponding eigenvalues $\lm_k$. Hence $D\eta_{k,j}=\lm_k\eta_{k,j}$. We can then define the operator
$|D|^{1/2}: L^2(S^m,\mbs(S^m))\to L^2(S^m,\mbs(S^m))$ by
\[
|D|^{1/2}\psi=\sum_{k\in \Z\setminus\{0\}}\sum_{j=1}^{m(\lm_k)}|\lm_k|^{1/2}
a_{k,j}\eta_{k,j},
\]
for $\psi=\sum
a_{k,j}\eta_{k,j}\in L^2(S^m,\mbs(S^m))$.
Let us set
\[
H^{1/2}(S^m,\mbs(S^m)):=\bigg\{\psi=\sum
a_{k,j}\eta_{k,j}
\in L^2(S^m,\mbs(S^m)):\, \sum_{k\in \Z\setminus\{0\}}\sum_{j=1}^{m(\lm_k)}|\lm_k||a_{k,j}|^2<\infty\bigg\}. 
\]
The space $H^{1/2}(S^m,\mbs(S^m))$ coincides with the Sobolev space $W^{\frac12,2}(S^m,\mbs(S^m))$
(see \cite{Adams, Ammann}), and hence we have the Sobolev embeddings: $H^{1/2}(S^m,\mbs(S^m))\hookrightarrow L^q(S^m,\mbs(S^m))$ for $2\leq q\leq \frac{2m}{m-1}$.
In the sequel, we endow $E:=H^{1/2}(S^m,\mbs(S^m))$
with the inner product
\[
\inp{\psi}{\va}=\real
\big(|D|^{1/2}\psi,|D|^{1/2}\va\big)_2
\]
and the induced norm $\|\cdot\|
=\inp{\cdot}{\cdot}^{1/2}$,
where $(\psi,\va)_2=\int_{S^m}
(\psi,\va)d\vol_{\ig_{S^m}}$ is the
$L^2$-inner product on spinors. Let us mention here that $\|\cdot\|$ is equivalent to the graph norm for the operator $|D|^{1/2}$, and hence $(E,\|\cdot\|)$ is complete and $C^{\infty}(S^m,\mbs(S^m))$ is dense in $E$. Furthermore, $E$ induces a splitting $E=E^+\op  E^-$ with
\begin{\equ}\label{decomposition1}
E^+:=\ov{\span\{\eta_{k,j}\}_{k>0}} \quad
\text{and} \quad
E^-:=\ov{\span\{\eta_{k,j}\}_{k<0}},
\end{\equ}
where the closure is taken in the $\|\cdot\|$-topology.
It is then clear that these are orthogonal subspaces
of $E$ on which the action
$
\int_{S^m}(D\psi,\psi)d\vol_{\ig_{S^m}}
$
is positive or negative.
In the sequel, with respect to this decomposition,
we will write $\psi=\psi^++\psi^-$ for any
$\psi\in E$.  The dual space of $E$ will be denoted by
$E^*:=H^{-\frac12}(S^m,\mbs(S^m))$.

\subsection{Variational settings}\label{variational settings}

As was mentioned before,  we consider nonlinear Dirac equations of the following form:
\begin{\equ}\label{nld-p}
D\psi=H|\psi|^{p-2}\psi \quad \text{on } S^m,
\end{\equ}
where $H:S^m\to(0,+\infty)$ is a $C^2$ function, $p\in(2,2^*]$ and $2^*=\frac{2m}{m-1}$.

Eq. \eqref{nld-p} has a variational structure: $\psi$ is a solution to \eqref{nld-p} if and only if $\psi$ is a critical point of $\cl_p$ defined by
\[
\aligned
\cl_p(\psi)&=\frac12\int_{S^m}(D\psi,\psi)
d\vol_{\ig_{S^m}}-\frac1{p}
\int_{S^m} H|\psi|^{p}d\vol_{\ig_{S^m}}  \\
&=\frac12\big(\|\psi^+\|^2-\|\psi^-\|^2\big) - \frac1{p}
\int_{S^m} H|\psi|^{p}d\vol_{\ig_{S^m}}
\endaligned
\]
for $\psi=\psi^++\psi^-\in E=E^+\op E^-$.

In order to simplify the notation, we will occasionally use the notation $\cl$ instead of $\cl_{2^*}$ in the sequel. And for first and second derivatives, we write $\cl_p'(\psi)[\va]$ for the derivative of $\cl_p$ at $\psi$ applied to $\va$ and, similarly, we write $\cl_p''(\psi)[\va,\phi]$ for the second derivative of $\cl_p$ at $\psi$ applied to $\va$ and $\phi$. By $L^p$ we denote the Banach space $L^p(S^m,\mbs(S^m))$ for $p\geq1$ and by $|\cdot|_p$ we denote the usual $L^p$-norm.

Since both the negative and positive parts of $Spec(D)$ are unbounded, the quadratic form in $\cl_p$ is of strongly indefinite type.The next result can be viewed as a Lyapunov-Schmidt reduction of the strongly indefinite functional $\cl_p$. Specifically, it consists of a 2-step procedure: first to reduce the functional $\cl_p$ to the subspace $E^+$ and then to reduce it on a Nehari-Pankov manifold in $E^+$. This procedure will make the subsequent estimates more transparent. The idea of this reduction was already employed to the study of Choquard-Pekar equation in \cite[Lemma 2.1 \& 2.2]{BJS} and was also worked out in detail in a recent paper \cite{BX} for nonlinear Dirac equations on spin manifolds.

\begin{Prop}\label{reduction}
\begin{itemize}
	\item[$(1)$] There exists a $C^1$ map $h_p: E^+\to E^-$ such that for $\psi\in E$:
\begin{\equ}\label{hlm}
		\cl_p'(\psi)[v]=0 \quad \forall v\in E^- \quad \Longrightarrow  \quad \psi^-=h_p(\psi^+).
\end{\equ}
Moreover, $h_p(u)$ maximizes $\cl_p(u+v)$ over all $v\in E^-$ for $u\in E^+$.

\item[$(2)$] The functional $I_p: E^+\to\R$, $I_p(u)=\cl_p(u+h_p(u))$, satisfies
\[
I_p'(u)=0 \quad \Longrightarrow \quad  \cl_p'(u+h_p(u))=0.
\]

\item[$(3)$] For every $u\in E^+\setminus\{0\}$, the map $I_{p,u}:[0,\infty)\to\R$, $I_{p,u}(t)=I_p(tu)$, is of class $C^2$ and satisfies
\begin{\equ}\label{it}
I_{p,u}'(t)=0, \ t>0 \quad \Longrightarrow \quad  I_{p,u}''(t)<0.
\end{\equ}
Moreover $I_{p,u}(0)=I_{p,u}'(0)=0$ and $I_{p,u}''(0)>0$.
\end{itemize}
\end{Prop}

\begin{Rem} 
We omit the proof of Proposition \ref{reduction} since it can be done by following the arguments in \cite{BX}. Here some remarks are in order. The paper \cite{BX} has set up a general variational framework to study the equation $D\psi=\lm\psi+|\psi|^{2^*-2}\psi$ and its variations on an arbitrary spin manifold $M$, where $\lm\in\R$ is a real parameter. Proposition \ref{reduction} can be viewed as a counterpart of  \cite[Proposition 7.2]{BX} by simply taking $\lm=0$ and $M=S^m$. The appearance of the positive function $H$ and the subcritical exponent $p$ in the functional $\cl_p$ are rather harmless since they do not affect the strict convexity of the super-quadratic part. Hence there is no obstacle to apply the arguments in \cite{BX} and to obtain Proposition \ref{reduction}.   
\end{Rem}

In the sequel, we shall call $(h_p,I_p)$
the reduction couple for $\cl_p$ on $E^+$.
From Proposition \ref{reduction} $(1)$ and $(2)$, it follows that critical points of $I_p$ and $\cl_p$ are
in one-to-one correspondence via the injective map
$u\mapsto u+h_p(u)$ from $E^+$ to $E$.
The Nehari-Pankov manifold for $\cl_p$ is then defined as
\begin{\equ}\label{nehari}
	\msn_p=\big\{u\in E^+\setminus\{0\}:\,
	I_p'(u)[u]=0 \big\}
\end{\equ}
so that the reduced functional $I_p$ is bounded from below on $\msn_p$. In fact, since
$I_p(u)=\cl_p(u+h_p(u))$, we have 
\[
\aligned
I_p'(u)[w]&=\frac{d}{dt}I_p(u+tw)\Big|_{t=0}=\frac{d}{dt}\cl_p\big(u+tw+h_p(u+tw)\big)\Big|_{t=0} \\
&\hspace{-3em}
=\inp{u}{w}-\inp{h_p(u)}{h_p'(u)[w]}-\real\int_{S^m}H|u+h_p(u)|^{p-2}\big(u+h_p(u),w+h_p'(u)[w]\big)d\vol_{\ig_{S^m}}
\endaligned
\]
for any $w\in E^+$. Due to Proposition \ref{reduction} (1), we find $\cl_p'(u+h_p(u))[v]=0$ for all $v\in E^-$. This implies
\[
\aligned
0
=-\inp{h_p(u)}{v}-\real\int_{S^m}H|u+h_p(u)|^{p-2}\big(u+h_p(u),v\big)d\vol_{\ig_{S^m}}
\endaligned
\]
for all $v\in E^-$. By noting that the Fr\'echet derivative of $h_p$ at a point $u\in E^+$, which is denoted by $h_p'(u): E^+\to E^-$, is nothing but a linear operator. By taking $v=h_p'(u)[w]$ and $h_p(u)$ respectively in the above expression, we have 
\[
-\inp{h_p(u)}{h_p'(u)[w]}-\real\int_{S^m}H|u+h_p(u)|^{p-2}\big(u+h_p(u),h_p'(u)[w]\big)d\vol_{\ig_{S^m}}=0
\]
and
\begin{\equ}\label{X}
	-\|h_p(u)\|^2-\real\int_{S^m}H|u+h_p(u)|^{p-2}\big(u+h_p(u),h_p(u)\big)d\vol_{\ig_{S^m}}=0.
\end{\equ}
Therefore, the expression of $I_p'(u)[w]$ can be simplified as
\[
I_p'(u)[w]
=\inp{u}{w}-\real\int_{S^m}H|u+h_p(u)|^{p-2}\big(u+h_p(u),w\big)d\vol_{\ig_{S^m}}, \quad \text{for } w\in E^+
\]
and particularly, by taking \eqref{X} into account, $I_p'(u)[u]$ can be further rewritten as
\[
\aligned
I_p'(u)[u]&=\|u\|^2-\real\int_{S^m}H|u+h_p(u)|^{p-2}\big(u+h_p(u),u\big)d\vol_{\ig_{S^m}} \\
&=\|u\|^2-\|h_p(u)\|^2-\int_{S^m}H|u+h_p(u)|^{p-2}\big(u+h_p(u),u+h_p(u)\big)d\vol_{\ig_{S^m}} \\
&=\|u\|^2-\|h_p(u)\|^2-\int_{S^m}H|u+h_p(u)|^{p}d\vol_{\ig_{S^m}}.
\endaligned
\]
Now, for $u\in\msn_p$, we have $I_p'(u)[u]=0$ and
\begin{\equ}\label{Xx2}
	I_p(u)=I_p(u)-\frac12I_p'(u)[u]=\frac{p-2}{2p}\int_{S^m}H|u+h_p(u)|^{p}d\vol_{\ig_{S^m}}\geq0,
\end{\equ}
where the last inequality comes from the positiveness of $H$. Furthermore, we notice that $u\in\msn_p$ implies $u\in E^+\setminus\{0\}$ and $h_p(u)\in E^-$ (where $E^+$ and $E^-$ are orthogonal in $E$). Then $\{x\in S^m: \, \big|u+h_p(u)\big|(x)>0\}$ must have positive measure (otherwise, $u=-h_p(u)$ a.e. on $S^m$, which suggests that $u\in E^+\cap E^-=\{0\}$). Hence, \eqref{Xx2} can be slightly improved as
\[
I_p(u)=I_p(u)-\frac12I_p'(u)[u]=\frac{p-2}{2p}\int_{S^m}H|u+h_p(u)|^{p}d\vol_{\ig_{S^m}}>0.  
\]

By Proposition \ref{reduction} $(3)$, we have that $\msn_p$ is a $C^1$ manifold of codimension $1$ in $E^+$ and that $\msn_p$ is a natural constraint for the problem of finding non-trivial critical points of $I_p$ on $E^+$. Indeed, one may consider the auxiliary functional $\ck_p(u)=I_p'(u)[u]$ and can compute that $\ck_p'(u)[u]=I_p'(u)[u]+I_p''(u)[u,u]$. Then, for $u\in\msn_p$, one derives $\ck_p(u)=I_{p,u}'(1)=0$ and $\ck_p'(u)[u]=I_{p,u}''(1)<0$, which suggests $\msn_p$ to be a $C^1$ manifold. Moreover, according to the Lagrange multiplier rule, if $u\in\msn_p$ is a constrained critical point of $I_p$, then there exists $\lm\in\R$ such that
\[
I_p'(u)[v]=\lm\ck_p'(u)[v]=\lm \big(I_p'(u)[v]+ I_p''(u)[u,v]\big) \qquad \forall v\in E^+.
\]
Letting $v=u$, so that $I_{p,u}'(1)=I_p'(u)[u]=0$, we can conclude from the above relation and the fact $I_{p,u}''(1)=I_p''(u)[u,u]<0$ that we must have $\lm=0$, and hence $u$ is indeed a critical point of $I_p$ in $E^+$. We also mention that, for $u\in E^+\setminus\{0\}$, the function $I_{p,u}(t)$ attains its unique positive critical point at $t=  t(p,u)>0$ which realizes its maximum (correspondingly, $t(p,u)u\in\msn_p$ is called the projection of $u$ on $\msn_p$), and hence we have $\msn_p$ is homeomorphic to the unit sphere of $E^+$ and
\[ 
I_p(u)=\max_{t>0}I_p(tu)=\max_{\psi\,\in\, \spann\{u\}\op E^-}\cl_p(\psi) \quad \forall u\in \msn_p.
\]

In the sequel, it is convenient to consider the reduced functional $I_p$ restricted to the constraint $\msn_p$. However, an additional difficulty may arise in the variational arguments: it is nearly impossible to calculate the value of $I_p(u)$ at any point $u\in\msn_p$. Even if we choose an explicit spinor $\psi\in E\setminus\{0\}$, we cannot directly calculate its projection $\psi^+$ in $E^+$ and the corresponding projection $t(p,\psi^+)\psi^+$ in $\msn_p$, moreover, we know almost nothing about the images of $h_p(t\psi^+)$ for $t>0$. We state, in the next lemma, a result about an upper bound estimate for the reduced functional $I_p$ on $\msn_p$ in the directions of ``almost critical points" of $I_p$ that will play a significant role our argument.

\begin{Lem}\label{key2}
Let $p_n\nearrow 2^*$ be a converging sequence and suppose that there exists a sequence of spinor fields $\{\psi_n\}\subset E$ such that
\begin{\equ}\label{contradiction-assumption}
c_1\leq\cl_{p_n}(\psi_n)\leq c_2 \quad \text{and} \quad
\cl_{p_n}'(\psi_n)\to0
\end{\equ}
for some constants $c_1,c_2>0$, as $n\to\infty$. Then $\{\psi_n\}$ is bounded in $E$ such that
\[
\|\psi_n^--h_{p_n}(\psi_n^+)\|=O\big( \|\cl_{p_n}'(\psi_n)\|_{E^*} \big) 
\]
and
\[
\max_{t>0}I_{p_n}(t\psi_n^+)\leq
\cl_{p_n}(\psi_n)+O\big(\|\cl_{p_n}'(\psi_n)\|_{E^*}^2\big).
\]
\end{Lem}

Lemma \ref{key2} is obtained by collecting the results  in \cite[Lemma~4.1, Lemma~7.3 - Corollary~7.6]{BX} with some minor modifications. The idea in the proof is built upon the strict concavity of $I_{p,u}(t)$ around its maximum point (i.e. \eqref{it}) and an application of the Inverse Function Theorem to the auxiliary functional $\ck_{p_n}(u)=I_{p_n}'(u)[u]$ near the point $\psi_n^+$. As an immediate consequence of Lemma \ref{key2}, we have

\begin{Cor}\label{continuity prop}
Let $p_n\nearrow 2^*$ as $n\to\infty$ and $c_1,c_2>0$.
For any $\theta>0$, there exists $\al>0$ such
that for all large $n$ and $\psi\in E$ satisfying
\[
c_1\leq\cl_{p_n}(\psi)\leq c_2 \quad \text{and} \quad
\|\cl_{p_n}'(\psi)\|_{E^*}\leq\al
\]
there holds
\[
\max_{t>0}I_{p_n}(t\psi^+)\leq \cl_{p_n}(\psi)+\theta.
\]
\end{Cor}

For $p\in(2,2^*]$, let us set
\begin{\equ}\label{tau p def}
\tau_p:=\inf\big\{I_p(u):\, u\in\msn_p  \big\}.
\end{\equ}
As a consequence of the following estimate
\[
I_p(u)\geq \cl_p(u)=\frac12\|u\|^2-\frac1p\int_{S^m}H|u|^pd\vol_{\ig_{S^m}}\geq\frac12\|u\|^2- C\|u\|^p, \quad \forall u\in E^+
\] 
where $C>0$ is a positive constant depending on $H_{max}$ and the Sobolev embedding $E\hookrightarrow L^p$, it turns out immediately that $\tau_p$ is lower bounded by a positive constant.

\begin{Lem}\label{tau p properties}
Let $p_n\nearrow 2^*$ as $n\to\infty$, then $\tau_{p_n}\to \tau_{2^*}$ as $n\to\infty$.
\end{Lem}
\begin{proof}
For arbitrary $\eps>0$ and $k\in\N$, by the Ekeland variational principle \cite{Ekeland, MW, Willem}, we can first choose $u\in\msn_{2^*}$ such that $\tau_{2^*}\leq I_{2^*}(u)<\tau_{2^*}+\frac1k$ and $\|I_{2^*}'(u)\|\leq \frac1k$.

Let $s_n>0$ be such that $s_nu\in\msn_{p_n}$. It follows from the arguments in \cite[see page 182]{BJS} that $\{s_n\}$ must be bounded (otherwise, $I_{p_n}(s_nu)\to-\infty$ as $s_n\to+\infty$). 
And hence $\tau_{p_n}\leq I_{p_n}(s_nu)$ is bounded from above.

Note that $\|I_{2^*}'(u)\|=\|\cl_{2^*}'(u+h_{2^*}(u))\|$ (see for instance \cite[Lemma 2.2]{BJS}) and that
\[
\aligned
&\cl_{2^*}'(u+h_{2^*}(u))[\va]-\cl_{p_n}'(u+h_{2^*}(u))[\va] \\
&\qquad =\real\int_{S^m}H\big(1-|u+h_{2^*}(u)|^{2^*-p_n}\big)|u+h_{2^*}(u)|^{p_n-2}(u+h_{2^*}(u),\va)d\vol_{\ig_{S^m}} \\
&\qquad = o_n(1)\|\va\|
\endaligned
\]
for all $\va\in E$, we derive that $\|\cl_{p_n}'(u+h_{2^*}(u))\|\leq \frac2k$ for all large $n$. And thus, by applying Corollary \ref{continuity prop}, we have
\begin{\equ}\label{X1}
\tau_{p_n}\leq I_{p_n}(s_nu)=\max_{s>0}I_{p_n}(su)\leq \cl_{p_n}(u+h_{2^*}(u))+\eps 
\end{\equ}
provided that $k$ is fixed large enough. Since the function $p\mapsto \int_{S^m}H|u+h_{2^*}(u)|^pd\vol_{\ig_{S^m}}$ is continuous, we have that $\cl_{p_n}(u+h_{2^*}(u))=\cl_{2^*}(u+h_{2^*}(u))+o_n(1)=I_{2^*}(u)+o_n(1)$. Therefore, by \eqref{X1} and the arbitrariness of $\eps$, we have $\limsup_{n\to\infty}\tau_{p_n}\leq \tau_{2^*}$.

On the other hand, we can choose $u_n\in\msn_{p_n}$ such that $I_{p_n}(u_n)\leq \tau_{p_n}+\frac1n$. Let $t_n>0$ be such that $t_nu_n\in\msn_{2^*}$. Then it follows that $\{t_n\}$ is also bounded. And we can derive that
\begin{\equ}\label{X2}
\tau_{2^*}\leq I_{2^*}(t_nu_n)=\cl_{2^*}(t_nu_n+h_{2^*}(t_nu_n))\leq\cl_{p_n}(t_nu_n+h_{2^*}(t_nu_n))+o_n(1),
\end{\equ}
where in the last inequality we have used the fact
\[
\aligned
\int_{S^m} H|\psi|^{p_n}d\vol_{\ig_{S^m}} &\leq
\Big( \int_{S^m}H|\psi|^{2^*}d\vol_{\ig_{S^m}} \Big)^{\frac{p_n}{2^*}}\Big( \int_{S^m}Hd\vol_{\ig_{S^m}} \Big)^{\frac{2^*-p_n}{2^*}} \\
&=\int_{S^m}H|\psi|^{2^*}d\vol_{\ig_{S^m}}+o_n(1)
\endaligned
\]
when $\|\psi\|$ is bounded independent of $n$. Hence, we see from \eqref{X2} that $\tau_{2^*}\leq I_{p_n}(u_n)+o_n(1)\leq \tau_{p_n}+\frac1n+o_n(1)$ as $n\to\infty$, which completes the proof.
\end{proof}

We conclude this subsection by formulating the exact value of $\tau_{2^*}$, that we will need in the sequel, i.e.
	\begin{\equ}\label{tau value}
		\tau_{2^*}=\frac{1}{2m (H_{max})^{m-1}}
		\big(\frac m2\big)^m \om_m
	\end{\equ}
	where $\om_m$ stands for the volume of $(S^m,\ig_{S^m})$. To see this, we first need to show that $\tau_{2^*}\geq\frac{1}{2m (H_{max})^{m-1}}(\frac m2)^m \om_m$. It can be seen from the fact $H(\xi)\leq H_{max}$ for $\xi\in S^m$ that 
	\begin{\equ}\label{cl-max functional}
		\cl_{2^*}(\psi)\geq\cl_{max}(\psi):=\frac12\big(\|\psi^+\|^2-\|\psi^-\|^2\big) -\frac{H_{max}}{2^*}\int_{S^m}|\psi|^{2^*}d\vol_{\ig_{S^m}} 
	\end{\equ}
	for all $\psi\in E$. And critical points of $\cl_{max}$ correspond to solutions of
	\begin{\equ}\label{nld-max}
		D\psi=H_{max}|\psi|^{2^*-2}\psi \quad \text{on } S^m.
	\end{\equ}
	Since $H_{max}>0$, by normalizing Eq. \eqref{nld-max}, we turn to consider 
	\begin{\equ}\label{nld-normalized}
		D\psi=|\psi|^{2^*-2}\psi \quad \text{on } S^m.
	\end{\equ}
	Then solutions of Eq. \eqref{nld-max} and \eqref{nld-normalized} are in one-to-one correspondence via the map 
	\begin{\equ}\label{solu-to-solu}
		\psi\mapsto (H_{max})^{\frac{m-1}2}\psi.
	\end{\equ}
	Recall that $\R^m$ and $S^m\setminus\{N\}$ (where $N\in S^m$ is the North pole) are conformally equivalent, it follows from \cite[Proposition 4.1]{Isobe-JFA} and \eqref{solu-to-solu} that if $\psi$ is a nontrivial solution to Eq. \eqref{nld-max} then
	\begin{\equ}\label{max lower bound}
		\cl_{max}(\psi)\geq\frac{1}{2m (H_{max})^{m-1}}(\frac m2)^m \om_m.
	\end{\equ}
    Since it is well-known that Eq. \eqref{nld-normalized} has a specific solution, which is a Killing spinor to the constant $-1/2$ (see e.g. \cite[Section 5.3]{AGHM}), we can see from \eqref{solu-to-solu} that the equality in \eqref{max lower bound} is achieved. Furthermore, notice that Proposition \ref{reduction} can be applied to the functional $\cl_{max}$. Let us denote $I_{max}$ the reduced functional for $\cl_{max}$ on $E^+$ and $\msn_{max}$ the associated Nehari-Pankov manifold. Then, by \eqref{cl-max functional}, we have $I_{2^*}(u)\geq I_{max}(u)$ for all $u\in E^+$ and, particularly,
	\[
	\max_{t>0}I_{2^*}(tu)\geq I_{max}(u) \quad \text{for all } u\in\msn_{max}.
	\]
	Jointly with the definition of $\tau_{2^*}$ and \eqref{max lower bound}, we obtain
	\[
	\tau_{2^*}\geq\frac{1}{2m (H_{max})^{m-1}}(\frac m2)^m \om_m.
	\]
	The other inequality requires a delicate upper bound estimate for $\tau_{2^*}$. In fact, by a careful choice of the parameters, Lemma \ref{energy1} in Section \ref{existence} provides a proof, and one may also refer to a similar result in \cite{AGHM}.

%


\subsection{A concentration-compactness alternative}
Again, we choose $\{p_n\}$ to be a strictly increasing
sequence such that $\displaystyle\lim_{n\to\infty}p_n=2^*$.
In what follows, we shall collect some asymptotic properties of solutions to the equation
\begin{\equ}\label{equs-n}
D\psi=H|\psi|^{p_n-2}\psi \quad \text{on } S^m
\end{\equ}
with a specific energy constraints, the proofs will be postponed in the Appendix. First of all, an alternative result comes as follows:

\begin{Prop}\label{alternative prop}
Suppose $\{\psi_n\}\subset E$ is a sequence such that
\begin{\equ}\label{key-assumption}
\tau_{2^*}\leq\cl_{p_n}(\psi_n)
\leq2\tau_{2^*}-\theta \quad \text{and} \quad
\cl_{p_n}'(\psi_n)\to0, \quad
\text{as } n\to\infty
\end{\equ}
for some constant $\theta>0$. Then, up to a subsequence,
either $\psi_n\rightharpoonup 0$ in $E$ or
$\psi_n$ converges in $E$ to a non-trivial solution $\psi_0$ of Eq. \eqref{Dirac m}
\end{Prop}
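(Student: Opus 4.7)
The plan is to run a Brezis-Lieb type splitting argument. First, I invoke Lemma \ref{boundedness} (which applies under \eqref{contradiction-assumption}, a condition weaker than \eqref{key-assumption}) to conclude that $\{\psi_n\}$ is bounded in $E$. Passing to a subsequence, $\psi_n\rightharpoonup\psi_0$ in $E$, $\psi_n\to\psi_0$ almost everywhere, and $\psi_n\to\psi_0$ in every $L^q$ with $q<2^*$. Next, testing $\cl_{p_n}'(\psi_n)\to 0$ against a fixed $\phi\in E$ and using Vitali's convergence together with the uniform $L^{2^*}$-bound, I can pass to the limit in the nonlinear term $\int Q|\psi_n|^{p_n-2}(\psi_n,\phi)d\vol_{\ig_{S^m}}$ and conclude that $\psi_0$ solves the critical limiting equation $D\psi_0=Q(\xi)|\psi_0|^{2^*-2}\psi_0$.

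If $\psi_0=0$, we are in the first alternative. So assume $\psi_0\neq 0$ and suppose for contradiction that $\psi_n\not\to\psi_0$ strongly. Setting $w_n=\psi_n-\psi_0$, the sequence $\{w_n\}$ is bounded, $w_n\rightharpoonup 0$, and (passing to a subsequence) $\liminf_n\|w_n\|>0$. Using the orthogonal splittings induced by the eigenspaces of $D$ together with a Brezis-Lieb lemma (uniform in $p_n\in(2,2^*]$), I obtain
\[
\|\psi_n^\pm\|^2=\|\psi_0^\pm\|^2+\|w_n^\pm\|^2+o(1),\quad
\int_{S^m}Q|\psi_n|^{p_n}d\vol=\int_{S^m}Q|\psi_0|^{p_n}d\vol+\int_{S^m}Q|w_n|^{p_n}d\vol+o(1),
\]
so that $\cl_{p_n}(\psi_n)=\cl_{p_n}(\psi_0)+\cl_{p_n}(w_n)+o(1)$. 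A parallel Brezis-Lieb argument applied to the derivative, together with $\cl_{2^*}'(\psi_0)=0$ and $\cl_{p_n}'(\psi_n)\to 0$, yields $\cl_{p_n}'(w_n)\to 0$ in $E^*$.

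Now I estimate the two pieces separately. Since $\psi_0$ is a nontrivial critical point of $\cl_{2^*}$, the spinor $\psi_0^-$ realizes the maximum of $\rr_{2^*}(\psi_0^++\cdot)$ on $E^-$, so $\msf_{2^*}(\psi_0^+)=\rr_{2^*}(\psi_0)\geq\tau_{2^*}$; as $\psi_0$ solves the critical equation, this gives $\int_{S^m}Q|\psi_0|^{2^*}d\vol\geq(\tau_{2^*})^m$ and therefore $\cl_{p_n}(\psi_0)\to\cl_{2^*}(\psi_0)=\frac{1}{2m}\int Q|\psi_0|^{2^*}d\vol\geq\frac{1}{2m}(\tau_{2^*})^m$. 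For the bubble $w_n$, the facts that $\cl_{p_n}'(w_n)\to 0$ and $\|w_n\|$ stays bounded away from zero force (via the last remark in Lemma \ref{boundedness}) $\cl_{p_n}(w_n)$ to remain bounded away from zero, so that \eqref{contradiction-assumption} holds for $\{w_n\}$. Corollary \ref{key2} then produces $\tilde u_n\in\msn_{p_n}$ with
\[
\cl_{p_n}(w_n)\geq I_{p_n}(\tilde u_n)+o(1)\geq\frac{p_n-2}{2p_n}(\tau_{p_n})^{p_n/(p_n-2)}+o(1),
\]
where the last inequality uses Lemma \ref{msf=I}. As $p_n\nearrow 2^*$, Proposition \ref{key3}(2) gives $\tau_{p_n}\to\tau_{2^*}$ and $\frac{p_n-2}{2p_n}\to\frac{1}{2m}$, $\frac{p_n}{p_n-2}\to m$, so $\liminf\cl_{p_n}(w_n)\geq\frac{1}{2m}(\tau_{2^*})^m$.

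Adding the two estimates gives $\liminf\cl_{p_n}(\psi_n)\geq\frac{1}{m}(\tau_{2^*})^m$, contradicting the upper bound $\cl_{p_n}(\psi_n)\leq\frac{1}{m}(\tau_{2^*})^m-\theta$. Hence $\psi_n\to\psi_0$ strongly in $E$ in the case $\psi_0\neq 0$. The delicate steps I anticipate are verifying the Brezis-Lieb decomposition for the $p_n$-dependent nonlinearity (so that the error $o(1)$ is indeed uniform as $p_n\to 2^*$) and transferring $\cl_{p_n}'(\psi_n)\to 0$ to $\cl_{p_n}'(w_n)\to 0$ while keeping track of the fact that $\psi_0$ only satisfies the \emph{critical} equation, not the $p_n$-subcritical one.
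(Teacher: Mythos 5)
Your proposal is correct and follows essentially the same route as the paper: subtract the weak limit $\psi_0$ (which solves the critical equation), show the remainder is again an almost-critical sequence for $\cl_{p_n}$ whose energy stays away from zero, bound each piece from below by $\frac1{2m}(\tau_{2^*})^m$ via Lemma \ref{msf=I}, Corollary \ref{key2} and the left-continuity of $p\mapsto\tau_p$ (Proposition \ref{key3}), and contradict the upper bound in \eqref{key-assumption}. The only minor difference is technical: you obtain energy additivity through a Brezis--Lieb splitting uniform in $p_n$, whereas the paper avoids this by writing $\cl_{p_n}(\psi_n)=\frac{p_n-2}{2p_n}\int_{S^m}(D\psi_n,\psi_n)\,d\vol_{\ig_{S^m}}+o_n(1)$ and splitting the quadratic form using weak convergence alone, proving the splitting of the derivative (your $\cl_{p_n}'(w_n)\to0$) by an Egorov-type argument.
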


In order to characterize the non-compact case (which corresponds to the weakly convergence $\psi_n\rightharpoonup 0$ in $E$) in the above alternatives, let us introduce the following notations. Let $Crit[H]=\{\xi\in S^m:\, \nabla H(\xi)=0\}$ denote the critical set of the function $H$.
Let $\xi\in V\subset S^m$ be an arbitrary point, we denote $\exp_\xi: U\subset T_\xi S^m\cong\R^m\to V$, $x=(x_1,\dots, x_n)\mapsto y=\exp_\xi x$ the exponential map and $(x_1,\dots,x_m)$ means the normal coordinates. For a sequence of points $\{a_n\}\subset S^m$ and an arbitrary decreasing sequence $R_n\searrow0$, as $n\to\infty$, we define the rescaled geodesic normal coordinates near each $a_n$ via $\mu_n(x)=\exp_{a_n}(R_n x)$.
Denoting $B_R^0=\big\{ x\in\R^m:\, |x|<R\big\}$, where
$|\cdot|$ is the Euclidean norm in $\R^m$, we have a
conformal equivalence $(B_R^0, \, R_n^{-2}\mu_n^*\ig_{S^m})
\cong (B_{R_nR}(a_n),\, \ig_{S^m})\subset S^m$ for all
large $n$. For ease of notation, we set $\ig_n=R_n^{-2}\mu_n^*\ig_{S^m}$.
By using the trivialization of the spinor bundle constructed by
Bourguignon and Gauduchon \cite{BG}, we see that the coordinate map $\mu_n$ induces a spinor bundle identification
 $\ov{(\mu_n)}_*:\mbs_x(B_R^0,\ig_n)\to
\mbs_{\mu_n(x)}(B_{R_nR}(a_n),\ig_{S^m})$ for each $n$. Moreover, due to the compactness of $S^m$, we may assume without loss of generality that $a_n\to a\in S^m$ as $n\to\infty$. Then, for all $n$ large, the aforementioned identifications depend smoothly in $a_n$. Particularly, we have the following blow-up result (the proof is postponed in the Appendix).

\begin{Prop}\label{blow-up prop}
Let $\{\psi_n\}\subset E$ satisfy \eqref{key-assumption}. If $\{\psi_n\}$
does not contain any compact subsequence.
Then, up to a subsequence
if necessary, there exist a sequence
$\{a_n\}\subset S^m$, $a_n\to a\in S^m$, as $n\to\infty$, a
sequence of radius $R_n\searrow0$, a real number $\lm\in(2^{-\frac1{m-1}},1]$ and a non-trivial solution $\phi_0$ of the equation
\[
D_{\ig_{\R^m}}\phi=\lm H(a)|\phi|^{2^*-2}\phi
\quad \text{on } \R^m
\]
 such that
\[
\lim_{n\to\infty}R_n^{\frac{m-1}2(2^*-p_n)}=\lm
\]
and
\[
\psi_n=R_n^{-\frac{m-1}2}\eta(\cdot)\ov{(\mu_n)}_*
\circ\phi_0\circ \mu_n^{-1}+o_n(1) \quad \text{in } E
\]
as $n\to\infty$,
where $\eta\in C^\infty(S^m)$ is a cut-off function
such that $\eta\equiv1$ on $B_r(a)$ and
$\supp\eta\subset B_{2r}(a)$, some $r>0$. Moreover, we
have
\[
\cl_{p_n}(\psi_n)\geq\frac{1}{2m (\lm H(a))^{m-1}}
\big(\frac m2\big)^m \om_m+o_n(1), \quad \text{as } n\to\infty
\]
where $\om_m$ stands for the volume of $(S^m,\ig_{S^m})$. If  $\cl_{p_n}'(\psi_n)\equiv0$ for all $n$ in \eqref{key-assumption}, then $a\in  Crit[H]$ and $\lm=1$.
\end{Prop}

\begin{Rem}\label{ground state remark}
\begin{itemize}
\item[(1)] Propositions \ref{alternative prop} and \ref{blow-up prop} are closely linked. More precisely, together with \eqref{tau value}, we can conclude  that, if $\{\psi_n\}$ is a sequence of critical points satisfying the energy estimates in \eqref{key-assumption} and does not contain any compact subsequence, then (up to a subsequence) the associated blow-up point $a\in Crit[H]$ satisfies $H(a)\geq2^{-\frac1{m-1}}H_{max}+\theta'$, for some $\theta'>0$.

\item[(2)] It is worth pointing out that, for $H\not\equiv constant$, the ground state energy solutions of \eqref{equs-n} (that is, the solutions with energy $\tau_{p_n}$) should blow up when $n\to\infty$. To see this, let us consider generically $\psi_n\in E$ such that $\cl_{p_n}(\psi_n)\to \tau_{2^*}$ and $\cl_{p_n}'(\psi_n)\to0$ as $n\to\infty$. By Proposition \ref{alternative prop}, it follows that either (a)  $\{\psi_n\}$ converges strongly to $\psi_0\neq0$ which satisfies Eq. \eqref{Dirac m} or (b) the sequence $\{\psi_n\}$ blows-up.

If (a) holds, then $\cl_{2^*}(\psi_0)=\tau_{2^*}$. To rule out this case, let us first recall the estimate \eqref{cl-max functional}, where the functional $\cl_{max}$ is introduced. As before, we denote $(h_{max},I_{max})$ the reduction couple for $\cl_{max}$ on $E^+$ and $\msn_{max}$ the associated Nehari-Pankov manifold. Then \eqref{cl-max functional} implies that $I_{2^*}(u)\geq I_{max}(u)$ for all $u\in E^+$. Since $\cl_{2^*}(\psi_0)=\tau_{2^*}$ and $\cl_{2^*}'(\psi_0)=0$, we have $\psi_0^+\in\msn_{2^*}$, $I_{2^*}(\psi_0^+)=\tau_{2^*}$ and $I_{2^*}'(\psi_0^+)=0$. Let $t_0>0$ be such that $t_0\psi_0^+\in\msn_{max}$. We claim that $t_0\neq1$. If, by contradiction, we have $t_0=1$. Then $\psi_0^+\in\msn_{2^*}\cap\msn_{max}$. By \eqref{max lower bound}, we find $I_{max}(\psi_0^+)=\tau_{2^*}$ and $I_{max}'(\psi_0^+)=0$ (this is because $\tau_{2^*}$ equals to the lowest energy level for  $I_{max}$ on the constraint $\msn_{max}$). Hence, we have
\begin{\equ}\label{XXX1}
	\tau_{2^*}=\frac12\big( \|\psi_0^+\|^2-\|h_{max}(\psi_0^+)\|^2 \big)-\frac{H_{max}}{2^*}\int_{S^m}|\psi_0^++h_{max}(\psi_0^+)|^{2^*}d\vol_{\ig_{S^m}}
\end{\equ}
and $\widetilde\psi_0:=\psi_0^++h_{max}(\psi_0^+)$ is a nontrivial solution of Eq. \eqref{nld-max}. Then, by the regularity result for weak solutions to Eq. \eqref{nld-max} (see e.g. \cite{CJW, Isobe-JFA, Wang}) and the {\it Weak Unique Continuation Property} for Dirac type operators (see \cite[Theorem 2.1]{BBMW}), we find that the zero set of $\widetilde\psi_0$ does not contain any nonempty open set. And therefore it follows directly from \eqref{XXX1} that
\[
\aligned
\tau_{2^*}&=\frac12\big( \|\psi_0^+\|^2-\|h_{max}(\psi_0^+)\|^2 \big)-\frac{H_{max}}{2^*}\int_{S^m}|\psi_0^++h_{max}(\psi_0^+)|^{2^*}d\vol_{\ig_{S^m}}\\
&<\frac12\big( \|\psi_0^+\|^2-\|h_{max}(\psi_0^+)\|^2 \big)-\frac{1}{2^*}\int_{S^m}H|\psi_0^++h_{max}(\psi_0^+)|^{2^*}d\vol_{\ig_{S^m}}  \\
&=\cl_{2^*}(\widetilde\psi_0) \leq I_{2^*}(\psi_0^+)=\tau_{2^*}
\endaligned
\]
which is a contradiction, proving the claim. Let's say $t_0\neq1$. Then we can infer
\[
\aligned
\tau_{2^*}&=I_{2^*}(\psi_0^+)>I_{2^*}(t_0\psi_0^+)\geq I_{max}(t_0\psi_0^+)\\
&\geq \inf_{u\in\msn_{max}}I_{\max}(u)=\frac{1}{2m (H_{max})^{m-1}}(\frac m2)^m \om_m=\tau_{2^*},
\endaligned
\]
which is again a contradiction. Thus we see that only (b) can occur.
\end{itemize}
\end{Rem}

\section{Analysis on the nodal set}\label{nodal sec}

In what follows, we shall study the nodal set of solutions to the following nonlinear Dirac equation with critical exponent
\begin{\equ}\label{eee}
D\psi=H|\psi|^{2^*-2}\psi \quad \text{on } S^m
\end{\equ}
And let us begin with
the following lemma which can be viewed as a
generalization of \cite[Lemma 4.1]{Ammann2009}
for the case $H\not\equiv constant$.
\begin{Lem}\label{Scal lemma}
Let $(M,\ig)$ be an arbitrary Riemannian spin $m$-manifold
(not necessary complete or compact). Assume that there
is a spinor $\psi$ of constant length $1$ and with
$D_\ig\psi=H\psi$ for a real-valued
smooth function $H:M\to\R$,
and let $\{e_1, \dots , e_m\}$ be a local orthonormal
frame field. Then
\[
Scal_\ig=\frac{4(m-1)}m
 H^2-4\sum_{k=1}^m|\nabla_{e_k}^H\psi|^2
\]
where $Scal_\ig$ is the scalar curvature of $M$ with respect
to $\ig$,
$\nabla^H_X\psi:=\nabla_X\psi+\frac{H}mX\cdot_\ig\psi$
for $X\in \Ga(TM)$ is an induced covariant derivative
and $\cdot_\ig$ denotes the Clifford multiplication.
\end{Lem}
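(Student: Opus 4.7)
The plan is to derive this as a Weitzenböck-type identity starting from the Schrödinger--Lichnerowicz formula $D_\ig^{\,2}=\nabla^*\nabla+\tfrac{Scal_\ig}{4}$, using the pointwise normalization $|\psi|\equiv 1$ to kill the naturally appearing Laplacian term. First I would compute $D_\ig^{\,2}\psi$ directly from the eigenvalue-type equation: since $D_\ig(f\va)=\nabla f\cdot_\ig\va+f D_\ig\va$ for any smooth function $f$, we get $D_\ig^{\,2}\psi=D_\ig(Q\psi)=\nabla Q\cdot_\ig\psi+Q^2\psi$. Substituting into Lichnerowicz yields $\nabla^*\nabla\psi=\nabla Q\cdot_\ig\psi+Q^2\psi-\tfrac{Scal_\ig}{4}\psi$.

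Next I would pair this with $\psi$ and take real parts. The term $(\nabla Q\cdot_\ig\psi,\psi)$ vanishes under $\Real$ because Clifford multiplication by a real tangent vector is skew-Hermitian, so $(X\cdot_\ig\psi,\psi)\in i\R$ for every $X\in\Ga(TM)$. On the other hand a direct computation (working in a frame with $\nabla e_k|_x=0$) gives the standard identity
\[
\Real(\nabla^*\nabla\psi,\psi)=\sum_{k=1}^m|\nabla_{e_k}\psi|^2+\tfrac12\Delta_\ig|\psi|^2.
\]
Since $|\psi|^2\equiv 1$, the Laplacian term drops out, and we are left with $\sum_k|\nabla_{e_k}\psi|^2=Q^2-\tfrac{Scal_\ig}{4}$.

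To match the stated form I would then expand the modified connection term by term:
\[
|\nabla_{e_k}^Q\psi|^2=|\nabla_{e_k}\psi|^2+\tfrac{2Q}{m}\Real(\nabla_{e_k}\psi,e_k\cdot_\ig\psi)+\tfrac{Q^2}{m^2}|e_k\cdot_\ig\psi|^2,
\]
use that $|e_k\cdot_\ig\psi|=|\psi|=1$, and sum over $k$. The cross term is computed via the Dirac operator itself: because $(X\cdot_\ig\va_1,\va_2)=-(\va_1,X\cdot_\ig\va_2)$, one has $\sum_k\Real(\nabla_{e_k}\psi,e_k\cdot_\ig\psi)=-\Real(D_\ig\psi,\psi)=-Q$. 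Plugging in and collecting gives $\sum_k|\nabla_{e_k}^Q\psi|^2=\sum_k|\nabla_{e_k}\psi|^2-\tfrac{Q^2}{m}$, and combining with the previous display yields the claimed identity.

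There is no real obstacle here beyond bookkeeping of signs and real parts; the only subtlety worth flagging is ensuring that the identity $\Real(\nabla^*\nabla\psi,\psi)=|\nabla\psi|^2+\tfrac12\Delta_\ig|\psi|^2$ is applied with the same sign convention that makes Lichnerowicz read $D_\ig^{\,2}=\nabla^*\nabla+\tfrac{Scal_\ig}{4}$, and that the cancellation of $(\nabla Q\cdot_\ig\psi,\psi)$ uses only the skew-Hermitian property of Clifford multiplication (in particular, the computation does not require $Q$ to be constant, which is precisely the generalization over \cite{Ammann2009}).
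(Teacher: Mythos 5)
Your argument is correct, and it reaches the identity by a slightly different route than the paper. Both proofs are Bochner--Lichnerowicz computations, but you use only the untwisted formula $D_\ig^2=\nabla^*\nabla+\tfrac{Scal_\ig}{4}$, kill the $\grad Q\cdot_\ig\psi$ term by skew-symmetry of Clifford multiplication, use constancy of $|\psi|$ to drop the Laplacian of $|\psi|^2$, and then convert $\sum_k|\nabla_{e_k}\psi|^2$ into $\sum_k|\nabla^Q_{e_k}\psi|^2$ by expanding the square, the cross term summing to $\tfrac{2Q}{m}\sum_k\Real(\nabla_{e_k}\psi,e_k\cdot_\ig\psi)=-\tfrac{2Q}{m}\Real(D_\ig\psi,\psi)=-\tfrac{2Q^2}{m}$. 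The paper instead works with the modified connection from the outset: it verifies that $\nabla^Q$ is metric, shows $-\Real(\Delta^Q\psi,\psi)=\sum_k|\nabla^Q_{e_k}\psi|^2$ using $|\psi|\equiv1$, and then invokes the twisted Schr\"odinger--Lichnerowicz formula $\big(D-\tfrac{Q}{m}\big)^2=-\Delta^Q+\tfrac{Scal_\ig}{4}+\tfrac{1-m}{m^2}Q^2$, citing Friedrich. The trade-off: the paper's route is shorter once that twisted formula is granted, but the formula is usually stated for a constant Killing number, so for nonconstant $Q$ one should check that the $\grad Q$-contributions of $\big(D-\tfrac{Q}{m}\big)^2$ and of $-\Delta^Q$ cancel (they do). Your route makes exactly this point transparent, since the only place where nonconstancy of $Q$ enters is the single term $\Real(\grad Q\cdot_\ig\psi,\psi)=0$; and your flagged sign-convention issue for $\Real(\nabla^*\nabla\psi,\psi)$ is indeed harmless here because $|\psi|^2$ is constant.
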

\begin{proof}
The algebraic properties of Clifford multiplication
imply that $\nabla^H$ is metric in the sense that
\[
X(\va_1,\va_2)=(\nabla_X^H\va_1,\va_2)
+(\va_1,\nabla_X^H\va_2)
\]
for all $\va_1,\va_2\in\Ga(\mbs(M))$ and $X\in\Ga(TM)$.

Locally, let
$-\De^H=-\sum_{k=1}^m\nabla_{e_k}^H\nabla_{e_k}^H$
denote the Laplace operator corresponds to $\nabla^H$.
Then we have
\begin{eqnarray}\label{x5}
-\real(\De^H\psi,\psi)&=&-\real\sum_{k=1}^m
(\nabla_{e_k}^H\nabla_{e_k}^H\psi,\psi)  \nonumber\\
&=&-\real\sum_{k=1}^m\big[ e_k(\nabla_{e_k}^H\psi,\psi)
-(\nabla_{e_k}^H\psi,\nabla_{e_k}^H\psi) \big]
\nonumber\\
&=&-\real\sum_{k=1}^m e_k(\nabla_{e_k}^H\psi,\psi)
+\sum_{k=1}^m|\nabla_{e_k}^H\psi|^2.
\end{eqnarray}
Since $(\nabla_{e_k}^H\psi,\psi)=(\nabla_{e_k}\psi,\psi)
+\frac{H}m\cdot(e_k\cdot_\ig\psi,\psi)$, we have
\[
\real(\nabla_{e_k}^H\psi,\psi)=\real(\nabla_{e_k}\psi,\psi)
=\frac12 \real \inp{\text{grad}\,|\psi|^2}{e_k}=0.
\]
Thus \eqref{x5} gives us
\begin{\equ}\label{x6}
-\real(\De^H\psi,\psi)=
\sum_{k=1}^m|\nabla_{e_k}^H\psi|^2.
\end{\equ}

On the other hand, by the twisted version of the
Schr\"odinger-Lichnerowicz formula
(see for instance \cite[Chapter 5]{Friedrich}), we have
\[
\Big(D-\frac Hm\Big)^2=
-\De^H+\frac{Scal_\ig}4+\frac{1-m}{m^2}H^2.
\]
And therefore, by $D\psi=H\psi$, $|\psi|\equiv1$
and \eqref{x6}, we obtain
\begin{eqnarray*}
\frac{(m-1)^2H^2}{m^2}&=&-\real(\De^H\psi,\psi)
+\frac{Scal_\ig}4+\frac{1-m}{m^2}H^2  \\
&=&\sum_{k=1}^m|\nabla_{e_k}^H\psi|^2
+\frac{Scal_\ig}4+\frac{1-m}{m^2}H^2,
\end{eqnarray*}
which completes the proof.
\end{proof}

The following theorem is due to B\"ar \cite{Bar1997}:

\begin{Thm}\label{Bar}
Let $(M,\ig)$ be compact and connected spin $m$-manifold
and let $\va$ be a solution of
\[
D\va=P\va
\]
where $P$ is a smooth endomorphism. Then the zero set
of $\va$ has at most Hausdorff dimension $m-2$.
And if $m=2$, then the zero set is discrete.
\end{Thm}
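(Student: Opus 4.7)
My plan is to prove B\"ar's theorem via the standard strategy: reduce the Dirac equation to an elliptic system amenable to the Aronszajn--Cordes unique continuation theorem, then analyze the blow-up at each zero of $\va$ to control the Hausdorff dimension of the nodal set.

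First I would square the Dirac operator. By the Schr\"odinger--Lichnerowicz formula,
\[
D^2\va=-\De\va+\frac{Scal_\ig}4\va,
\]
where $\De$ is the spinor connection Laplacian. Applying $D$ to $D\va=P\va$, one obtains, in local coordinates and an orthonormal frame $\{e_1,\dots,e_m\}$,
\[
D(P\va)=\sum_{k=1}^m e_k\cdot_\ig(\nabla_{e_k}P)\va+P(D\va)
=\sum_{k=1}^m e_k\cdot_\ig(\nabla_{e_k}P)\va+P^2\va.
\]
Thus $\va$ satisfies a second-order equation of the form
\[
-\De\va=A\cdot\nabla\va+B\va
\]
with $A,B$ smooth bundle endomorphisms; written in a local trivialization of $\mbs(M)$ this is a diagonal elliptic system of second order with principal part $-\De\cdot\mathrm{Id}$ and bounded lower-order coefficients.

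The next step is to invoke the Aronszajn--Cordes unique continuation principle for such elliptic systems: if $\va$ is not identically zero on the connected manifold $M$, then $\va$ cannot vanish to infinite order at any $x_0\in M$. Combined with a standard frequency function argument adapted to spinor-valued maps (monotonicity of a suitable Almgren-type quotient computed in normal coordinates around $x_0$), this yields a well-defined finite order of vanishing $N(x_0)\in\N$ at every zero $x_0$ of $\va$. Rescaling $\va_r(x)=r^{-N(x_0)}\va(\exp_{x_0}(rx))$ and passing to a limit $r\to 0$, one extracts a non-trivial \emph{tangent spinor} $\va_0$ on $T_{x_0}M\cong\R^m$ which is harmonic with respect to the flat Dirac operator, homogeneous of degree $N(x_0)$, and whose nodal set contains the tangent ``cone'' of $\va^{-1}(0)$ at $x_0$. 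The Hausdorff dimension statement then follows from a stratification argument: the zero set of a non-trivial homogeneous harmonic spinor on $\R^m$, being the zero set of a real-analytic $\mbs_m$-valued function, has Hausdorff dimension at most $m-2$, and covering $\va^{-1}(0)$ by finitely many such local models yields $\dim_{\msh}\va^{-1}(0)\leq m-2$.

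For the special case $m=2$, the hard part is the final step which I would handle separately by exploiting the identification $\mbs_2\cong\C\oplus\C$ under which the flat Dirac operator $D_{\ig_{\R^2}}$ decomposes as $\begin{pmatrix}0&\pa_{\bar z}\\ \pa_z&0\end{pmatrix}$ (up to constants). The homogeneous harmonic tangent spinors at a zero $x_0$ are thus pairs of homogeneous holomorphic/antiholomorphic polynomials of degree $N(x_0)\geq 1$, each having $0$ as its \emph{only} zero. Hence the nodal set of any tangent spinor is $\{0\}$, and propagating this back via the frequency estimate shows that every zero of $\va$ is isolated; by compactness of $M$ the set $\va^{-1}(0)$ is therefore finite and in particular discrete. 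The main obstacle I anticipate is the careful setup of the frequency function for the coupled elliptic system so that monotonicity still holds in presence of the lower-order terms $A\cdot\nabla\va+B\va$; this is typically managed by choosing the Almgren quotient normalized with a conformal factor absorbing the zeroth-order potential, mirroring the classical treatment for scalar elliptic equations with bounded potential.
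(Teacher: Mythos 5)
A preliminary remark: the paper does not prove Theorem \ref{Bar} at all — it is quoted from B\"ar \cite{Bar1997} — so there is no in-paper proof to compare against; what you are attempting is a reconstruction of B\"ar's theorem, and your outline does follow the broad lines of the known argument (strong unique continuation after squaring, a leading homogeneous Euclidean-harmonic spinor at each zero, holomorphicity of that leading term when $m=2$).

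The genuine gap is the dimension count for $m\geq3$, i.e. exactly the step you compress into ``a stratification argument''. Your justification that the nodal set of a non-trivial homogeneous harmonic spinor on $\R^m$ has Hausdorff dimension at most $m-2$ ``being the zero set of a real-analytic $\mbs_m$-valued function'' is not a proof: real-analytic (even polynomial) spinor fields such as $x_1\phi_0$, with $\phi_0$ a constant spinor, vanish on hypersurfaces, so analyticity alone only yields codimension one. Codimension two is precisely where the Dirac structure must be used — for instance, if the zero set contained a smooth hypersurface then along it all tangential derivatives vanish, the equation gives $0=D\va=\nu\cdot\nabla_\nu\va$ with $\nu$ the unit normal, and invertibility of Clifford multiplication by $\nu$ forces $\nabla\va=0$ there, leading to infinite-order vanishing and a contradiction with unique continuation; and even then one still needs an induction/dimension-reduction argument to convert ``contains no hypersurface'' into a Hausdorff-dimension bound. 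As written, this step silently assumes the content of B\"ar's theorem in the model case, so it is circular. Relatedly, knowing that each tangent spinor has a small nodal set does not by itself control $\va^{-1}(0)$ near $x_0$: the nodal set of $\va$ is not contained in the cone over the nodal set of the tangent spinor, so you need the quantitative expansion $\va=\va_N+O(|x|^{N+1})$ (with corresponding control of derivatives) — this is B\"ar's main technical lemma — or a genuine Federer-type dimension reduction; neither is supplied by the frequency-function sketch, whose monotonicity for a coupled system with first-order terms is itself only promised, not proved.

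Two smaller points. The Leibniz computation $D(P\va)=\sum_k e_k\cdot(\nabla_{e_k}P)\va+P(D\va)$ is valid only when $P$ commutes with Clifford multiplication; for a general smooth endomorphism the correct identity produces $\sum_k e_k\cdot P\nabla_{e_k}\va$ instead of $P(D\va)$ — harmless for the structure $-\De\va=A\cdot\nabla\va+B\va$, but the displayed identity is wrong as stated. On the positive side, your $m=2$ argument is sound in outline: once one has $\va=\va_N+o(|x|^N)$ with $\va_N$ a non-zero pair of (anti)holomorphic monomials, the bound $|\va_N(x)|\geq c|x|^N$ gives isolated zeros, and compactness of $M$ gives finiteness; but this again rests on the asymptotic expansion, which is the part your proposal leaves unproved.
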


Unfortunately, in general, Eq \eqref{eee} does not satisfy
B\"ar's theorem because $2^*-2=\frac2{m-1}\not\in\Z$ for
$m\geq4$ and $H|\psi|^{2^*-2}$ could not be smooth even for $m=3$.
However, in dimension $2$, we have
$2^*=4$ and in this case solutions of Eq \eqref{eee} and
the corresponding $P=H|\psi|^2$ are smooth enough provided that $H\in C^\infty$.
Thus, applying Theorem \ref{Bar}, we have the
nodal set of a solution $\psi$ is a discrete subset.

\begin{Prop}\label{nodal prop}
On a compact boundaryless spin surface $(M,\ig)$ of
genus $\ga$, suppose
that $H: M\to(0,\infty)$ is a smooth function and
$H_{max}:=\max_{\xi\in M}H(\xi)$.
Let $\psi$ be a solution of the equation
\[
D_\ig\psi=H|\psi|^2\psi \quad \text{on } M.
\]
Then the number of zeros of $\psi$ is at most
\[
\ga-1+\frac{\int_MH^2|\psi|^4d\vol_{\ig}}{4\pi}.
\]
In particular, if $M$ has $\ga=0$ (i.e. $M$ is
homeomorphic to the $2$-sphere) and
$\psi$ satisfies
\begin{\equ}\label{energy control}
\int_MH|\psi|^4d\vol_{\ig}<\frac{8\pi}{H_{max}},
\end{\equ}
then $\psi$ has no zero at all.
\end{Prop}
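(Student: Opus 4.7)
The plan is to combine B\"ar's theorem on the nodal structure (Theorem~\ref{Bar}) with a Gauss--Bonnet computation on the Weierstra{\ss}-type conformal metric $\tilde\ig := |\psi|^4 \ig$ off the zero set of $\psi$. Since $m=2$ makes the nonlinearity $Q|\psi|^2\psi$ smooth, standard bootstrapping gives $\psi\in C^\infty(M,\mbs(M))$ and Theorem~\ref{Bar} yields that the zero set $Z=\{z_1,\dots,z_k\}\subset M$ is finite. B\"ar's refined asymptotic analysis, identifying the leading term of $\psi$ at each $z_i$ with a nontrivial harmonic spinor of the flat Dirac operator on $\R^2$ (essentially a homogeneous polynomial of some integer degree $n_i\ge 1$), then yields $|\psi(x)|\sim c_i |x|^{n_i}$ in normal coordinates at $z_i$. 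In particular, the total number of zeros satisfies $k\le \sum_{i=1}^{k} n_i$.

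Next, on $M\setminus Z$, Proposition~\ref{conformal formula} applied to $\tilde\ig = |\psi|^4\ig$ produces a unit-length spinor $\tilde\psi := F(|\psi|^{-1}\psi)$ satisfying $D_{\tilde\ig}\tilde\psi = Q\tilde\psi$, and Lemma~\ref{Scal lemma} in dimension two then yields the pointwise inequality
\[
K_{\tilde\ig} \;=\; \tfrac12 \mathrm{Scal}_{\tilde\ig} \;=\; Q^2 - 2\sum_{k=1}^{2}|\nabla_{e_k}^Q\tilde\psi|^2 \;\le\; Q^2.
\]
To compute the total curvature of $\tilde\ig$, I would use the two-dimensional conformal change formula $K_{\tilde\ig} = |\psi|^{-4}(K_\ig - 2\Delta_\ig \log|\psi|)$ together with $d\vol_{\tilde\ig} = |\psi|^4 d\vol_\ig$, so that $K_{\tilde\ig}\,d\vol_{\tilde\ig} = (K_\ig - 2\Delta_\ig\log|\psi|)\,d\vol_\ig$ on $M\setminus Z$. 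Integrating over $M_\vr := M\setminus\bigcup_i B_\vr(z_i)$ and applying Stokes's theorem, the Laplacian term produces a boundary flux which, by the asymptotics $\log|\psi|\sim n_i\log r$ around $z_i$, tends to $-2\pi\sum_i n_i$ as $\vr\to 0$, while $\int_{M_\vr}K_\ig\,d\vol_\ig \to 2\pi\chi(M) = 4\pi(1-\ga)$. Putting this together gives
\[
\int_{M\setminus Z}K_{\tilde\ig}\,d\vol_{\tilde\ig} \;=\; 4\pi(1-\ga) + 4\pi\sum_{i=1}^{k} n_i,
\]
which is equivalently the cone-point Gauss--Bonnet formula for $\tilde\ig$, a metric with cone singularity of total angle $2\pi(2n_i+1)$ at each $z_i$.

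Integrating the pointwise bound $K_{\tilde\ig}\le Q^2$ against $d\vol_{\tilde\ig}$ and comparing with the identity above yields
\[
4\pi(1-\ga) + 4\pi\sum_i n_i \;\le\; \int_M Q(\xi)^2|\psi|^4\,d\vol_\ig,
\]
so that $k\le \sum_i n_i \le \ga - 1 + \frac{\int_M Q^2|\psi|^4\,d\vol_\ig}{4\pi}$, which is the stated bound. For the second assertion, $\ga=0$ combined with the hypothesis \eqref{energy control} gives $\int_M Q^2|\psi|^4 \le Q_{max}\int_M Q|\psi|^4 < 8\pi$, whence $k<1$ and $\psi$ is nowhere zero. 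The main obstacle lies in the first step: the argument crucially relies on an \emph{integer} vanishing order $n_i\ge 1$ at each zero, not merely the discreteness of $Z$ given by Theorem~\ref{Bar}. Establishing this integrality requires B\"ar's leading-term expansion, pinning down a nontrivial homogeneous harmonic spinor of positive integer degree at each zero; this in turn rests on the smoothness of the nonlinearity (available only because $m=2$) together with a unique-continuation-type argument ruling out flat zeros.
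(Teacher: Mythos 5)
Your proposal is correct and follows essentially the same route as the paper: pass to the conformal metric $|\psi|^4\ig$ on the complement of the zero set, use Proposition \ref{conformal formula} together with Lemma \ref{Scal lemma} to get the pointwise bound $K\le Q^2$, and run Gauss--Bonnet with conical contributions of total angle $2\pi(2n_j+1)$ coming from the vanishing orders $n_j\ge 1$ at the zeros. The only cosmetic difference is that you obtain the singular Gauss--Bonnet identity via the flux of $\Delta_\ig\log|\psi|$ through shrinking circles, whereas the paper removes small disks and estimates the geodesic curvature of their boundaries; both hinge on the same finite-order (B\"ar-type) leading expansion at each zero, which you correctly identify as the point requiring justification.
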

\begin{proof}
The proof is similar to that carried out in \cite[Proposition 8.1]{Ammann2009}. Here, for the sake of completeness, we will sketch the proof as follows.

On $M\setminus\psi^{-1}(0)$, let us introduce a new
metric $\ig_1=|\psi|^4\ig$. Then the transformation
formula for the Dirac operator under conformal changes
(Proposition \ref{conformal formula}) gives us that
there is a spinor field $\va_1$ on $M\setminus\psi^{-1}(0)$
such that
\[
D_{\ig_1}\va_1=H\va_1  \quad \text{and} \quad
|\va_1|_{\ig_1}=1.
\]
By Lemma \ref{Scal lemma}, we have the scalar curvature
can be estimated by $Scal_{\ig_1}\leq2H^2$. And hence,
we have the estimate for the Gau\ss \ curvature, that is
$K_{\ig_1}\leq H^2$. Moreover, we have
\begin{\equ}\label{vol}
\vol(M\setminus\psi^{-1}(0),\ig_1)=
\int_{M\setminus\psi^{-1}(0)}d\vol_{\ig_1}=\int_{M}
|\psi|^4d\vol_{\ig}.
\end{\equ}

Let $\psi^{-1}(0)=\{\xi_1,\dots,\xi_l\}$ for some $l\geq1$
if $\psi^{-1}(0)\neq\emptyset$. Let $n_j$ be the order
of the first non-vanishing term in the Taylor
expansion of $\psi$ near $\xi_j$, $j=1,\dots,l$.
Then it can be calculated that the integral of the geodesic
curvature $\ka_{\ig_1}$ over a small circle around
each $\xi_j$ is close to $-2\pi(2n_j+1)$. Now, we may
remove small open disks around $\xi_j$ from $M$,
and we obtain a surface $\tilde M$ with boundary.
By using the Gauss-Bonnet formula, we can derive
\[
2\pi\chi(\tilde M)=\int_{\tilde M}K_{\ig_1}d\vol_{\ig_1}
+\int_{\pa\tilde M}\ka_{\ig_1}ds
\leq\int_MH^2|\psi|^4d\vol_{\ig}-\sum_{j=1}^l
2\pi(2n_j+1).
\]
And hence, return to $M$ itself, we have
\[
2\pi(2-2\ga)=2\pi\chi(M)\leq
\int_MH^2|\psi|^4d\vol_{\ig}-4\pi\sum_{j=1}^l n_j,
\]
which implies $l$ is at most
$
\ga-1+\frac{\int_MH^2|\psi|^4d\vol_{\ig}}{4\pi}
$.

For the case $\ga=0$ and $\psi$
satisfies \eqref{energy control}, we assume
$\psi^{-1}(0)=\{\xi_1,\dots,\xi_l\}\neq\emptyset$.
Then we have
\[
1\leq l\leq -1+\frac{\int_MH^2|\psi|^4d\vol_{\ig}}{4\pi}
\leq-1+\frac{H_{max}}{4\pi}\int_MH|\psi|^4d\vol_{\ig}
<1
\]
which is impossible. Therefore $\psi$ has no zero at all.
\end{proof}

\section{Proof of Theorem \ref{main theorem1} and Corollary \ref{main theorem2}}\label{existence}
\subsection{The existence results}


For notation convenience, we will write $\cl$ instead of $\cl_{2^*}$. The super-level sets for $H$
will be simply denoted by
\[
\{H\geq c\}=\big\{\xi\in S^m:\, H(\xi)\geq c\big\}
\]
for any $c\in\R$. Recalling the hypothesis $(H*)$, it follows that
$H$ has only finitely many critical points at level $d$. 
And we may denote the corresponding critical set as
\[
\ck_{d}=\big\{\xi\in S^m:\, H(\xi)=d,\,
\nabla H(\xi)=0 \big\}=\{\xi_1,\dots,\xi_l\}
\]
for some $l\in\N$. We also introduce a
$\nu$-neighborhood of $\ck_{d}$ as
$\co_\nu:=\cup_{k=1}^l B_{\nu}(\xi_k)$  for $\nu>0$,
where $B_r(\xi)$ is the open ball centered at $\xi\in S^m$
with radius $r>0$ with respect to the metric $\ig_{S^m}$.
Standard deformation arguments show that hypothesis $(H*)$ ensures that $d$ is indeed a critical value for $H$, i.e. $\ck_d\neq\emptyset$, and $\ch$ is not contractible
in $\{H\geq d+\sa\}$ for any $\sa>0$. Moreover, for small $\nu>0$,
there exists $\sa>0$ such that $\ch$ is contractible in
$\{H\geq d+\sa\}\cup\co_\nu$.

\medskip

Now we construct the test
spinors that will be helpful to characterize the level sets
of the energy functional $I_{p_n}$ on the constraint manifold $\msn_{p_n}$ (see its definition in Subsection \ref{variational settings}). Let's start with an arbitrary $\phi_0\in\mbs_m$ (a constant spinor)
such that $|\phi_0|=\frac1{\sqrt2}\big(\frac m2
\big)^{\frac{m-1}2}$. Then, we define
\[
\phi(x)=f(x)^{\frac m2}(1-x)\cdot_{\ig_{\R^m}}\phi_0
\]
where $f(x)=\frac2{1+|x|^2}$ for $x\in\R^m$. It is
easy to verify that
\[
D_{\ig_{\R^m}}\phi=\frac m2f\phi
\]
and
\begin{\equ}\label{phi-norm}
|\phi|=\big(\frac m2\big)^{\frac{m-1}2}f^{\frac{m-1}2}.
\end{\equ}
For $\vr>0$, we set
\[
\phi_\vr(x)=\vr^{-\frac{m-1}2}\phi(x/\vr) \quad \text{and}
\quad \phi_{y,\vr}(x)=H(y)^{-\frac{m-1}2}\phi_\vr(x)
\]
where $y\in S^m$ is a fixed point. Then we have
$\phi_{y,\vr}$ satisfies the equation
\begin{\equ}\label{phi-y-vr-equ}
D_{\ig_{\R^m}}\phi_{y,\vr}=H(y)|\phi_{y,\vr}|^{2^*-2}
\phi_{y,\vr} \quad \text{on } \R^m.
\end{\equ}

To transplant $\phi_{y,\vr}$ on the sphere $S^m$, we need the following notation of stereographic projections. Given $y\in S^m$, we can embed $S^m$ into $\R^{m+1}$ in the way that its antipodal point $-y$ is the
North pole. Denoting $\cs_y: S^m\setminus\{-y\}\to\R^m$ the stereographic projection, we have $\cs_y(y)=0$. Moreover, $S^m\setminus\{-y\}$ and $\R^m$ are conformally equivalent due to the fact $(\cs_y^{-1})^*\ig_{S^m}=f^2\ig_{\R^m}$ with $f(x)=\frac2{1+|x|^2}$.

Recall the conformal transformation property mentioned in
Proposition \ref{conformal formula}, there is an
isomorphism of spinor bundles
$\iota:\mbs\big(\R^m, (\cs_\xi^{-1})^*\ig_{S^m}\big)\to
\mbs(\R^m,\ig_{\R^m})$ such that
\[
D_{\ig_{\R^m}}\big( \iota(\va) \big) = \iota\big( f^{\frac{m+1}2}
D_{(\cs_\xi^{-1})^*\ig_{S^m}} (f^{-\frac{m-1}2}\va)\big),
\]
where $D_{(\cs_\xi^{-1})^*\ig_{S^m}}$ is the Dirac
operator on $\R^m$ with respect to the metric
$(\cs_\xi^{-1})^*\ig_{S^m}$. Thus $\phi_{y,\vr}$ corresponds
to a spinor field $\psi_{y,\vr}$ on $S^m$ via the formula
\begin{\equ}\label{phi-y-vr--psi-y-vr}
\phi_{y,\vr}=\iota\big( f^{\frac{m-1}2}\psi_{y,\vr}
\circ\cs_y^{-1} \big)
\end{\equ}
such that $\psi_{y,\vr}$ satisfies the equation
\[
D\psi_{y,\vr}=H(y)|\psi_{y,\vr}|^{2^*-2}\psi_{y,\vr} \quad \text{on } (S^m,\ig_{S^m})
\]

\begin{Lem}\label{energy1}
For any $\sa>0$ and $\theta>0$
there exists $\vr>0$ such that
\[
\max_{t>0}I_{p_n}(t\psi_{y,\vr}^+)\leq
\frac1{2m\, (d+\sa)^{m-1}}\big(\frac m2\big)^{m}\om_m
+2\theta
\]
uniformly for all large $n$ and $y\in\{H\geq d+\sa\}$.
\end{Lem}
\begin{proof}
The strategy is to use Corollary \ref{continuity prop}.
And to begin with, we first fix $\al>0$ associated to $\theta$
as in Corollary \ref{continuity prop}.

In $E^*$, we have
\[
\cl'(\psi_{y,\vr})=
D\psi_{y,\vr}-H|\psi_{y,\vr}|^{2^*-2}\psi_{y,\vr}
=\big( H(y)-H \big)|\psi_{y,\vr}|^{2^*-2}\psi_{y,\vr}.
\]
Let $\psi\in E$ be such that $\|\psi\|\leq 1$, we have
\begin{eqnarray}\label{dd0}
\cl'(\psi_{y,\vr})[\psi]&=&
\real\int_{S^m}\big( H(y)-H \big)
|\psi_{y,\vr}|^{2^*-2}(\psi_{y,\vr},\psi)d\vol_{\ig_{S^m}}
\nonumber \\
&=&\real\int_{\R^m}\big( H(y)-H\circ\cs_y^{-1} \big)
|\phi_{y,\vr}|^{2^*-2}\big(\phi_{y,\vr}, \iota\big(
f^{\frac{m-1}2}\psi\circ\cs_y^{-1}\big)\big)
d\vol_{\ig_{\R^m}} \nonumber \\
&\leq& C \Big(\int_{\R^m}\big| H(y)-H\circ\cs_y^{-1}
\big|^{\frac{2^*}{2^*-1}}
|\phi_{y,\vr}|^{2^*}d\vol_{\ig_{\R^m}}
\Big)^{\frac{2^*-1}{2^*}}.
\end{eqnarray}
Fix $\de>0$ arbitrarily small, we may find that
$H\circ\cs_y^{-1}(x)=H(y)+O(\de)$ uniformly
for $|x|\leq\de$.
And hence, by the fact $\int_{\R^m}|\phi_\vr|^{2^*}
d\vol_{\ig_{\R^m}}\equiv\big(\frac m2\big)^m\om_m$,
we can get
\begin{\equ}\label{dd2}
\int_{B_\de^0}\big| H(y)-H\circ\cs_y^{-1}
\big|^{\frac{2^*}{2^*-1}}
|\phi_{y,\vr}|^{2^*}d\vol_{\ig_{\R^m}} \leq
O(\de^{\frac{2^*}{2^*-1}}).
\end{\equ}
On the other hand, we find that
\begin{\equ}\label{dd3}
\int_{\R^m\setminus B_\de^0}
\big| H(y)-H\circ\cs_y^{-1} \big|^{\frac{2^*}{2^*-1}}
|\phi_{y,\vr}|^{2^*}d\vol_{\ig_{\R^m}} \leq
C \int_{\frac\de\vr}^\infty \frac{r^{m-1}}{(1+r^2)^m}dr
\leq C \big(\frac\vr\de\big)^m.
\end{\equ}
Therefore, combining \eqref{dd0}, \eqref{dd2} and
\eqref{dd3}, we have
\begin{\equ}\label{dd4}
\|\cl'(\psi_{y,\vr})\|_{E^*}\leq \frac\al2 \quad
\text{uniformly for all } y\in S^m
\end{\equ}
provided that $\vr>0$ is fixed small enough.

Next, we estimate the upper bound of 
$\cl(\psi_{y,\vr})$ in a similar way, that is
\begin{eqnarray}\label{dd5}
\cl(\psi_{y,\vr})&=&\frac{H(y)}{2m}\int_{S^m}
|\psi_{y,\vr}|^{2^*}d\vol_{\ig_{S^m}}+\frac1{2^*}
\int_{S^m}\big( H(y)-H \big) |\psi_{y,\vr}|^{2^*}
d\vol_{\ig_{S^m}}  \nonumber\\
&=&\frac{H(y)}{2m}\int_{\R^m}
|\phi_{y,\vr}|^{2^*}d\vol_{\ig_{\R^m}}+\frac1{2^*}
\int_{\R^m}\big( H(y)-H\circ\cs_y^{-1} \big)
|\phi_{y,\vr}|^{2^*}d\vol_{\ig_{\R^m}}  \nonumber\\
&\leq&\frac{H(y)}{2m}\int_{\R^m}
|\phi_{y,\vr}|^{2^*}d\vol_{\ig_{\R^m}}+\frac\theta2
\end{eqnarray}
if $\vr$ is small enough. Moreover, by noticing that
\begin{\equ}\label{dd*}
\int_{\R^m}|\phi_{y,\vr}|^{2^*}d\vol_{\ig_{\R^m}}
=H(y)^{-m}\int_{\R^m}|\phi_\vr|^{2^*}d\vol_{\ig_{\R^m}}
=H(y)^{-m}\big(\frac m2\big)^m\om_m,
\end{\equ}
we soon obtain from \eqref{dd5} and $y\in\{H\geq d+\sa\}$ that
\begin{\equ}\label{dd6}
\cl(\psi_{y,\vr})\leq\frac1{2m H(y)^{m-1}}
\big(\frac m2\big)^m\om_m +\frac\theta2
\leq\frac1{2m\, (d+\sa)^{m-1}}
\big(\frac m2\big)^m\om_m +\frac\theta2
\end{\equ}
uniformly in $y$.

Remark that
the function $p\mapsto |\psi|_p$ is continuous provided that
$\psi\in E$ is fixed. Hence if we fix $\vr$ sufficiently small,
then it can be derived from \eqref{dd6} and the fact $\{H\geq d+\sa\}$ is compact that
\begin{\equ}\label{energy up}
0<c_1\leq\cl_{p_n}(\psi_{y,\vr})\leq \frac1{2m\, (d+\sa)^{m-1}}
\big(\frac m2\big)^m\om_m +\theta
\end{\equ}
uniformly for all large $n$, where $c_1>0$
is some constant.

It remains to evaluate the derivatives of $\cl_{p_n}$
for large $n$. Take $\psi\in E$ arbitrarily as a test spinor,
then it follows  that
\[
\aligned
&\cl_{p_n}'(\psi_{y,\vr})[\psi]-\cl'(\psi_{y,\vr})[\psi] \\
&\quad=\real\int_{S^m}H\big(|\psi_{y,\vr}|^{2^*-2}
-|\psi_{y,\vr}|^{p_n-2}\big)
(\psi_{y,\vr},\psi)d\vol_{\ig_{S^m}} \\
&\quad=\real\int_{\R^m}(H\circ\cs_y^{-1})\big(
|\phi_{y,\vr}|^{2^*-p_n}-f^{\frac{m-1}2(2^*-p_n)}\big)
|\phi_{y,\vr}|^{p_n-2}(\phi_{y,\vr}, \iota(f^{\frac{m-1}2}\psi
\circ\cs_y^{-1}))d\vol_{\ig_{\R^m}}.
\endaligned
\]
Remark that, by \eqref{phi-norm}, we have
\[
|\phi_{y,\vr}(x)|=H(y)^{-\frac{m-1}2}\cdot\vr^{-\frac{m-1}2}
\cdot\big(\frac m2\big)^{\frac{m-1}2}\cdot \Big(
\frac{2\vr^2}{\vr^2+|x|^2} \Big)^{\frac{m-1}2}.
\]
Thus
\[
|\phi_{y,\vr}|^{2^*-p_n}-f^{\frac{m-1}2(2^*-p_n)}
=C_\vr^{2^*-p_n}\Big(
\frac{2}{1+|x|^2} \Big)^{\frac{m-1}2(2^*-p_n)}\Big[
\Big( \frac{\vr^2+\vr^2|x|^2}{\vr^2+|x|^2}
\Big)^{\frac{m-1}2(2^*-p_n)}-1 \Big],
\]
where $C_\vr=H(y)^{-\frac{m-1}2}\cdot\vr^{-\frac{m-1}2}
\cdot\big(\frac m2\big)^{\frac{m-1}2}>0$ is nothing but a constant.
Since $\vr>0$ is fixed sufficiently small (say $\vr<1$),
we easily get
\[
\vr^2\leq\frac{\vr^2+\vr^2|x|^2}{\vr^2+|x|^2} \leq 1
\quad \text{for all } x\in\R^m.
\]
Therefore, due to $p_n\to2^*$, we obtain $C_\vr^{2^*-p_n}=1+o_n(1)$ and
\[
|\phi_{y,\vr}|^{2^*-p_n}-f^{\frac{m-1}2(2^*-p_n)}
=o_n(1)\Big(
\frac{2}{1+|x|^2} \Big)^{\frac{m-1}2(2^*-p_n)}
=o_n(1)f^{\frac{m-1}2(2^*-p_n)}
\]
uniformly in $y$ as $n\to\infty$. This implies
\[
\aligned
&\cl_{p_n}'(\psi_{y,\vr})[\psi]-\cl'(\psi_{y,\vr})[\psi]\\
&\quad=o_n(1)\real\int_{\R^m}(H\circ\cs_y^{-1})
f^{\frac{m-1}2(2^*-p_n)}
|\phi_{y,\vr}|^{p_n-2}(\phi_{y,\vr}, \iota(f^{\frac{m-1}2}\psi
\circ\cs_y^{-1}))d\vol_{\ig_{\R^m}}  \\
&\quad=o_n(1)\real\int_{S^m}H|\psi_{y,\vr}|^{p_n-2}
(\psi_{y,\vr},\psi)d\vol_{\ig_{S^m}}  \\
&\quad\leq o_n(1)\|\psi\|
\endaligned
\]
uniformly for all $y\in \{H\geq d+\sa\}$ as $n\to\infty$. Since
$\psi\in E$ is arbitrary, we can derive from \eqref{dd4} that
\begin{\equ}\label{derivetive-norm}
\|\cl_{p_n}'(\psi_{y,\vr})\|_{E^*}\leq
\|\cl'(\psi_{y,\vr})\|_{E^*}+\|\cl_{p_n}'(\psi_{y,\vr})
-\cl'(\psi_{y,\vr})\|_{E^*}\leq \al
\end{\equ}
uniformly for all $y\in \{H\geq d+\sa\}$  and large $n$.

Combining \eqref{energy up} and \eqref{derivetive-norm},
we may then apply Corollary \ref{continuity prop} to get
the desired assertion.
\end{proof}

The next result concerns with the upper bound estimate for $\displaystyle \max_{t>0}I_{p_n}(t\psi_{y,\vr}^+)$ where $y$ locates near the critical set $\ck_d$. Recall that, for $\nu>0$, we denote $\co_\nu$ the $\nu$-neighborhood of $\ck_d$.
\begin{Lem}\label{energy2}
There exist $C_0,\,\nu_0>0$ such that
\[
\max_{t>0}I_{p_n}(t\psi_{y,\vr}^+)\leq\frac1{2m\, d^{m-1}}\big(\frac m2
\big)^m\om_m
-\left\{
\aligned
&C_0 \vr^2|\ln\vr| + O(\vr^2) &\, & m=2,\\[0.4em]
&C_0 \vr^2 +O(\vr^\frac{m+3}2) & & m\geq 3,
\endaligned \right.
\]
uniformly for all small $\vr$, large $n$ and $y\in\co_{\nu_0}$.
Particularly, for all $y\in\co_{\nu_0}$, the estimate
\[
\max_{t>0}I_{p_n}(t\psi_{y,\vr_0}^+)\leq
\frac1{2m\, d^{m-1}}\big(\frac m2
\big)^m\om_m-\theta_0 \quad \text{for some } \theta_0>0
\]
holds as long as $\vr_0$ is small.
\end{Lem}
\begin{proof}
Since $\ck_{d}=\{\xi_1,\dots,\xi_l\}$
contains only finitely many points, without loss of generality,
from now on we will first restrict ourselves to a neighborhood around $\xi_1$.
Since the hypothesis $(H*)$ ensures the Hessian of $H$
at each $\xi\in\ck_{d}$ is positive definite.
Hence, there exits $R_1, \de_0>0$ such that
\begin{\equ}\label{key}
H(y)\geq d + \de_0\dist_{\ig_{S^m}}(y,\xi_1)^2
\end{\equ}
for all $y\in B_{2R_1}(\xi_1)\subset S^m$.

Analogous to \eqref{dd5},
let us estimate the following quantity
\[
\cl(\psi_{y,\vr})
=\frac{H(y)}{2m}\int_{\R^m}
|\phi_{y,\vr}|^{2^*}d\vol_{\ig_{\R^m}}
+\frac1{2^*}\int_{\R^m}\big( H(y)-H\circ
\cs_y^{-1} \big)
|\phi_{y,\vr}|^{2^*}d\vol_{\ig_{\R^m}}.
\]
And, by \eqref{dd*} and \eqref{key}, we find that
\begin{\equ}\label{key4-1}
\cl(\psi_{y,\vr})\leq\frac1{2m\, d^{m-1}}\big(\frac m2
\big)^m\om_m +\frac1{2^*}\int_{\R^m}\big( H(y)-H\circ
\cs_y^{-1} \big)
|\phi_{y,\vr}|^{2^*}d\vol_{\ig_{\R^m}}.
\end{\equ}
To estimate the second integral, let us recall that $H(y)=H\circ\cs_y^{-1}(0)$ and $y$ locates close to
$\xi_1$, hence we have the Hessian of $H\circ\cs_y^{-1}(\cdot)$ is positive definite in an open ball $B_{r_0}^0$, for some $r_0<R_1$ and $y\in B_{R_1}(\xi_1)$. In particular, we deduce that
\begin{\equ}\label{strongly convex}
2H(y)-H\circ\cs_y^{-1}(x)-H\circ\cs_y^{-1}(-x)\leq-2\de_0|x|^2
\quad \text{for all } x\in B_{r_0}^0.
\end{\equ}
At this point, let us split the last integral in \eqref{key4-1} into two parts
\[
\Big( \int_{B_{r_0}^0}+\int_{\R^m\setminus B_{r_0}^0}\Big)\big( H(y)-H\circ
\cs_y^{-1} \big)
|\phi_{y,\vr}|^{2^*}d\vol_{\ig_{\R^m}}.
\]
And it can be seen from a similar estimate in \eqref{dd3} that
\[
\int_{\R^m\setminus B_{r_0}^0}\big( H(y)-H\circ
\cs_y^{-1} \big)
|\phi_{y,\vr}|^{2^*}d\vol_{\ig_{\R^m}}=O(\vr^m).
\]
Notice that
$|\phi_{y,\vr}(x)|=|\phi_{y,\vr}(-x)|$ for all $x\in\R^m$, it follows from \eqref{strongly convex} that
\[
\aligned
&\int_{B_{r_0}^0}\big( H(y)-H\circ
\cs_y^{-1} \big)
|\phi_{y,\vr}|^{2^*}d\vol_{\ig_{\R^m}}  \\
&\qquad \leq-\de_0\int_{B_{r_0}^0}
|x|^2
|\phi_{y,\vr}|^{2^*}d\vol_{\ig_{\R^m}}  =-C_1\vr^2\int_0^{\frac{r_0}\vr}
\frac{r^{m+1}}{(1+r^2)^m}dr\leq\left\{
\aligned
& -C_1 \vr^2|\ln\vr| &\,& m=2,\\
&- C_1 \vr^2 & & m\geq3,
\endaligned \right..
\endaligned
\]
for all $\vr$ small,
where $C_1>0$ is a constant depending only on the
dimension $m$ and the value of $H(y)$. Therefore, by \eqref{key4-1}, we obtain immediately
\[
\cl(\psi_{y,\vr})\leq\frac1{2m\, d^{m-1}}\big(\frac m2
\big)^m\om_m+O(\vr^m)-\left\{
\aligned
& C_1 \vr^2|\ln\vr| &\,& m=2,\\
& C_1 \vr^2 & & m\geq3,
\endaligned \right.
\]
uniformly for $y\in B_{R_1}(\xi_1)$ and for $\vr$ small.

On the other hand, for the derivative of $\cl$, we
shall use \eqref{dd0} to get
\[
\|\cl'(\psi_{y,\vr})\|_{E^*}\leq C
\Big(\int_{\R^m}\big| H(y)-H\circ\cs_{y}^{-1}
\big|^{\frac{2^*}{2^*-1}}
|\phi_{y,\vr}|^{2^*}d\vol_{\ig_{\R^m}}
\Big)^{\frac{2^*-1}{2^*}}
\]
for all $\vr>0$. Then, it follows that
\begin{\equ}\label{cl-der}
\|\cl'(\psi_{y,\vr})\|_{E^*}^{\frac{2^*}{2^*-1}}
\leq C|\nabla H|_{L^\infty(B_{r_0}(y))}^{\frac{2m}{m+1}}
\int_{B_{r_0}^0}|x|^{\frac{2m}{m+1}}
|\phi_{y,\vr}|^{2^*}d\vol_{\ig_{\R^m}}+O(\vr^m)
\end{\equ}
as $\vr\to0$. Remark that
\begin{\equ}\label{x2}
\int_{\R^m}|x|^{\frac{2m}{m+1}}
|\phi_{y,\vr}|^{2^*}d\vol_{\ig_{\R^m}}
= O(\vr^{\frac{2m}{m+1}})
\end{\equ}
as $\vr\to0$. Combining \eqref{cl-der}
and \eqref{x2}, we conclude that
\[
\|\cl'(\psi_{y,\vr})\|_{E^*}\leq
C_2 |\nabla H|_{L^\infty(B_{r_0}(y))} \cdot \vr
+O(\vr^{\frac{m+1}2})
\]
as $\vr\to0$ with some constant $C_2>0$.

\medskip

Now, given $\vr>0$ small enough, 
we see that  there exists $n_\vr\in\N$
such that for all $n\geq n_\vr$
\begin{\equ}\label{x3}
\cl_{p_n}(\psi_{y,\vr})
\leq\frac1{2m\, d^{m-1}}\big(\frac m2
\big)^m\om_m +O(\vr^m) -\left\{
\aligned
& \frac{C_1}2 \vr^2|\ln\vr| &\,& m=2,\\
& \frac{C_1}2 \vr^2 & & m\geq3,
\endaligned \right.
\end{\equ}
and
\begin{\equ}\label{x4}
\|\cl_{p_n}'(\psi_{y,\vr})\|_{E^*}\leq
2C_2 |\nabla H|_{L^\infty(B_{r_0}(y))} \cdot \vr
+O(\vr^{\frac{m+1}2})
\end{\equ}
uniformly for $y\in B_{R_1}(\xi_1)$.
Since $\nabla H(\xi_1)=0$, we can derive that
\[
|\nabla H|_{L^\infty(B_{r_0}(y))}\to0 \quad
\text{as } y\to\xi_1 \text{ and } r_0\to0.
\]
And therefore, by combining \eqref{x3}, \eqref{x4} and
Lemma \ref{key2}, we soon have
\[
\max_{t>0}I_{p_n}(t\psi_{y,\vr}^+)\leq\frac1{2m\, d^{m-1}}\big(\frac m2
\big)^m\om_m
-\left\{
\aligned
&\frac{C_1}4 \vr^2|\ln\vr| + O(\vr^2) &\, & m=2,\\[0.4em]
&\frac{C_1}4 \vr^2 +O(\vr^\frac{m+3}2) & & m\geq 3,
\endaligned \right.
\]
uniformly for $y\in B_{R_1}(\xi_1)$ 
provided that $R_1$ and $r_0$ are fixed small enough.

If $\ck_{d}=\{\xi_1,\dots,\xi_l\}$ contains multiple points. By repeating the
above argument, one shall get correspondingly $R_1,\dots,R_l>0$. And
the proof will be completed by simply taking $\nu_0=
\min\{R_1,\dots,R_l\}$.
\end{proof}

Now we are ready to proof Theorem \ref{main theorem1}

\begin{proof}[Proof of Theorem \ref{main theorem1}]
The proof will be accomplished by the following four steps:

\textbf{Step 1.} Recall the fact that, by the hypothesis $(H*)$,
for any $\nu>0$ small there exists $\sa>0$ such that $\ch$ is
contractible in $\{H\geq d+\sa\}\cup\co_\nu$. Now let
us fix $\nu_0>0$ as in Lemma \ref{energy2}, then we get
some $\sa_0>0$ such that $\ch$ is contractible in
$\cc_0:=\{H\geq d+\sa_0\}\cup\co_{\nu_0}$.
At this point, we may choose $\vr_0,\,\theta_0,\, \theta>0$
small such that
\[
\max\Big\{
\frac1{2m(d+\sa_0)^{m-1}}\big(\frac m2\big)^m\om_m,\
\frac1{2m\, d^{m-1}}\big(\frac m2\big)^m\om_m
-\theta_0 \Big\} \leq
\frac1{2m\, d^{m-1}}\big(\frac m2\big)^m\om_m
-3\theta
\]
where $\theta_0$ depends on $\vr_0$
(by Lemma \ref{energy2}).

Recall the the Nehari-Pankov manifold $\msn_{p_n}$
defined for $I_{p_n}$ (see \eqref{nehari}), let us
introduce the sub-level sets for $I_{p_n}$ as
\[
I_{p_n}^c=\big\{ u\in\msn_{p_n}:\, I_{p_n}(u)\leq c\big\}
\]
for $c\in\R$. Denoted by
%
%
\begin{\equ}\label{a}
\hat a=\frac1{2m\, d^{m-1}}\big(\frac m2\big)^m\om_m
-\theta,
\end{\equ}
then a combination of Lemma \ref{energy1} and
Lemma \ref{energy2} (we fix $\vr>0$ small enough such that both the lemmas hold) gives us that there exists a continuous embedding $\Phi_n:\cc_0\to I_{p_n}^{\hat a}$ for all $n$ large. We also remark that $\hat a<2\tau_{2^*}-\theta$.

\medskip

\textbf{Step 2.} Due to the hypothesis $(H*)$, we have
$H_{min}<d$. Hence if we fix $\xi_0\in S^m$
such that $H(\xi_0)=H_{min}$, then $\xi_0\not\in
\{H\geq d\}$.
Let $\cs_0: S^m\setminus\{\xi_0\}\to\R^m$
be the stereographic projection from $\xi_0$ (that is, we
treat $\xi_0$ as the North pole). We see that the
image of $\cc_0$ under $\cs_0$ is a bounded set in $\R^m$,
and thus there exists $R_0>0$ such that
\[
\cs_0(\cc_0)\subset B_{R_0}^0=\big\{x\in\R^m:\,
|x|<R_0 \big\}.
\]
Now we can introduce
a barycenter-type function $\Upsilon: E\setminus\{0\}\to\R^m$ as
\[
\Upsilon(\psi)=
\frac{\int_{S^m}\zeta\circ\cs_0(\xi)|\psi|^{2^*}
d\vol_{\ig_{S^m}}}{\int_{S^m}|\psi|^{2^*}
d\vol_{\ig_{S^m}}},
\]
where $\zeta:\R^m\to\R^m$ is defined by
\[
\zeta(x)=\left\{
\aligned
&x  &\,& |x|<R_0, \\
&\frac{R_0 x}{|x|} & & |x|\geq R_0.
\endaligned \right.
\]

Next, we claim that $\theta>0$ can be chosen
sufficiently small such that for all large $n$
and $u\in\msn_{p_n}$ satisfying
\[
I_{p_n}(u)<\tau_{2^*}+\theta
=\frac1{2m (H_{max})^{m-1}}\big(\frac m2\big)^m\om_m
+\theta,
\]
there holds
\begin{\equ}
\cs_0^{-1}\circ\Upsilon\big(u+h_{p_n}(u)\big)\in \ch_\de.
\end{\equ}

Suppose to the contrary that there exist $\theta_n\to0$ and
$u_n\in\msn_{p_n}$ such that
\begin{\equ}\label{contradiction X}
I_{p_n}(u_n)<\tau_{2^*}+\theta_n
\quad \text{but} \quad
\cs_0^{-1}\circ\Upsilon(\psi_n)\not\in \ch_\de,
\end{\equ}
where we set $\psi_n:=u_n+h_{p_n}(u_n)$. Recall the definition
of $\tau_p$ for $p\in(2,2^*]$ (see \eqref{tau p def}), by
Lemma \ref{tau p properties} we can get
\[
I_{p_n}(u_n)\geq
\tau_{p_n}
=\tau_{2^*}+o_n(1).
\]
Thus, as $n\to\infty$, each $u_n$ almost minimize the functional $I_{p_n}$ on $\msn_{p_n}$. And it follows from the well-known Ekeland Variational Principle (\cite{Ekeland, MW, Willem}) that for each $n$ there exists
$\tilde u_n\in\msn_{p_n}$ such that
\begin{\equ}\label{XX}
I_{p_n}(\tilde u_n)<\tau_{2^*}+2\theta_n,
\quad
\big\|I_{p_n}'\big|_{\msn_{p_n}}(\tilde u_n)\big\|
\leq \frac{8\theta_n}\epsilon
\quad \text{and} \quad
\|u_n-\tilde u_n\|\leq 2\epsilon.
\end{\equ}
where $\epsilon>0$ is a given small constant whose value will be fixed later. Remark that we may simply rewrite the definition of $\msn_{p_n}$ in \eqref{nehari} as
$\msn_{p_n}=\big\{ u\in E^+:\, \ck_{p_n}(u)=0 \big\}$ with $\ck_{p_n}(u)=I_{p_n}'(u)[u]$. Then the Lagrange multiplier rule implies the existence of  $t_n\in\R$ such that
\[
I_{p_n}'\big|_{\msn_{p_n}}(\tilde u_n)
=I_{p_n}'(\tilde u_n) + t_n \ck_{p_n}'(\tilde u_n).
\]
By using the estimate
$\ck_{p_n}'(\tilde u_n)[\tilde u_n]\leq -\frac{p_n-2}{p_n-1}
\int_{S^m}H|\tilde u_n+h_{p_n}(\tilde u_n)|^{p_n} d\vol_{\ig_{S^m}}$ (a similar inequality has been shown in \cite[Lemma 7.4]{BX}),
we can easily conclude from \eqref{XX} that $t_n\to0$
and, hence, $I_{p_n}'(\tilde u_n)\to0$ as $n\to\infty$.
Now, set $\tilde\psi_n=\tilde u_n+h_{p_n}(\tilde u_n)$,
it is evident that
\[
\tau_{2^*}+o_n(1)\leq
\cl_{p_n}(\tilde\psi_n) <\tau_{2^*}
+2\theta_n \quad \text{and} \quad
\cl_{p_n}'(\tilde\psi_n)\to0
\]
as $n\to\infty$. Then, according to $(2)$ of
Remark \ref{ground state remark} and
Proposition \ref{alternative prop}, we have the sequence
$\{\tilde\psi_n\}$ must blow-up. Moreover, by 
Proposition \ref{blow-up prop}, there exists
a convergent sequence $\{a_n\}\subset S^m$,
$a_n\to a\in \ch$ as $n\to\infty$ and a
sequence of positive numbers $R_n\searrow0$ and a non-trivial solution $\phi_0$ of
\[
D_{\ig_{\R^m}}\phi_0=H_{max}|\phi_0|^{2^*-2}\phi_0
\quad \text{on } \R^m
\]
such that 
\[
\tilde\psi_n=R_n^{-\frac{m-1}2}\eta(\cdot)\ov{(\mu_n)}_*
\circ\phi_0\circ\mu_n^{-1}+o_n(1) \quad \text{in } E
\]
as $n\to\infty$, where $\mu_n$
and $\eta$ are as in Proposition \ref{blow-up prop}.
Therefore, $\int_{S^m}|\tilde\psi_n|^{2^*}
d\vol_{\ig_{S^m}}$ is bounded away from $0$ and particularly
\[
\Upsilon(\tilde\psi_n)\to \cs_0(a)\in \cs_0(\ch)
\quad \text{as } n\to\infty.
\]

Recalling the definition of $\Upsilon$, we see that the derivative
$\Upsilon': E\setminus\{0\}\to E^*$ exists and the operator norm is bounded around each $\tilde\psi_n$. Hence, by fixing the constant $\epsilon$ 
small enough in \eqref{XX}, we have $|\Upsilon(\psi_n)-\Upsilon(\tilde\psi_n)|<\frac\de2$, and therefore we get
$\Upsilon(\psi_n)\in \cs_0(\ch_\de)$
for all $n$ large which contradicts to \eqref{contradiction X}.

\medskip

\textbf{Step 3.} We are going to show that $I_{p_n}$
has a critical point in $I_{p_n}^{\hat a}\setminus I_{p_n}^{\hat b}$
where $\hat a$ is defined in \eqref{a} and
\[
\hat b=\frac1{2m (H_{max})^{m-1}}\big(\frac m2\big)^m\om_m
+\theta.
\]
We emphasize here that the embedding $\Phi_n:\cc_0\to\msn_{p_n}$ gives us a description of low-energy levels in the sense that $\Phi_n(\ch)\in I_{p_n}^{\hat b}$ and $\cs_0^{-1}\circ\Upsilon\circ\Phi_n|_\ch$ is homotopic to
the inclusion $\ch\hookrightarrow \ch_\de$.
Fixing all these notations, we define the family 
\[
\Lam_n=\Big\{\varpi\in C\big(\ch\times[0,1],\msn_{p_n}\big):\, \varpi(\cdot,0)=\Phi_n \text{ and } \varpi(\cdot,1) \text{ collapses to a single spinor}   \Big\}.
\]

Clearly, for all $n$ large, $\Lam_n\neq\emptyset$ since the contractibility of $\ch$ in $\cc_0$ gives us a continuous homotopy
$\ga:\ch\times[0,1]\to \cc_0$ such that
$\ga(\cdot,0)=id$, $\ga(\cdot,1)\equiv\xi_*\in\cc_0$ and $\Phi_n\circ\ga\in\Lam_n$. Since $E$ is
embedded compactly into $L^{p}$ for all $p\in[1,2^*)$,
we have $I_{p_n}$ satisfies the Palais-Smale condition
for each $n$.
And thus, by standard variational arguments, one can easily verify
\[
c_n:=\inf_{\varpi\in\Lam_n}\sup_{(\xi,t)\in\ch\times[0,1]}I_{p_n}(\varpi(\xi,t))
\]
is a critical value for $I_{p_n}$. Particularly, $c_n\in(\hat b,\hat a]$.

\medskip

\textbf{Step 4.} Let $\psi_n^+$ be a critical point of $I_{p_n}$ at level $c_n$. It suffices to show that the sequence $\{\psi_n=\psi_n^++h_{p_n}(\psi_n^+)\}$ contains a compact subsequence.

Let us assume to the contrary that any of such sequence $\{\psi_n\}$ will
blow up and let $a\in S^m$ be the associated blow-up point. According to Proposition \ref{blow-up prop}
and the fact $\cl_{p_n}(\psi_n)=I_{p_n}(\psi_n^+)=c_n\leq \hat a$,
the blow-up point $a$ should locate inside $\{H\geq d+\sa_0\}$. By hypothesis $(H*)$, there is no critical value of $H$ in the interval $(d,H_{max})$ and hence we have $a\in \ch$.

Moreover, from Proposition \ref{blow-up prop} again, there is a non-trivial solution $\phi_0$ of
\[
D_{\ig_{\R^m}}\phi=H_{max}|\phi|^{2^*-2}\phi
\quad \text{on } \R^m
\]
such that
\[
\psi_n=R_n^{-\frac{m-1}2}\eta(\cdot)\ov{(\mu_n)}_*
\circ\phi_0\circ \mu_n^{-1}+o_n(1) \quad \text{in } E
\]
as $n\to\infty$, where $R_n$, $\eta$, $\mu_n$ are as in Proposition \ref{blow-up prop}. Moreover
\[
\int_{S^m}H|\psi_n|^{p_n}d\vol_{\ig_{S^m}}=R_n^{\frac{m-1}2(2^*-p_n)}\int_{B_{2r/R_n}^0}H\circ\mu_n|\phi_0|^{p_n}d\vol_{\ig_n}+o_n(1)
\]
as $n\to\infty$. Since for arbitrary $R>0$
\[
\aligned
&R_n^{\frac{m-1}2(2^*-p_n)}\int_{B_{2r/R_n}^0\setminus B_R^0}H\circ\mu_n|\phi_0|^{p_n}d\vol_{\ig_n}\\
&\quad \leq
C\Big( \int_{B_{2r/R_n}^0\setminus B_R^0}|\phi_0|^{2^*} d\vol_{\ig_{\R^m}} \Big)^{\frac{p_n}{2^*}}\big( (2r)^m-(R_nR)^m \big)^{\frac{2^*-p_n}{2^*}}
\endaligned
\]
and
\[
R_n^{\frac{m-1}2(2^*-p_n)}\int_{B_R^0}H\circ\mu_n|\phi_0|^{p_n}d\vol_{\ig_n}
=H_{max}\int_{B_R^0}|\phi_0|^{2^*}d\vol_{\ig_n}+o_n(1),
\]
we derive that
\begin{\equ}\label{energy id}
\int_{S^m}H|\psi_n|^{p_n}d\vol_{\ig_{S^m}}=H_{max}\int_{\R^m}|\phi_0|^{2^*}d\vol_{\ig_n}+o_n(1)
\end{\equ}
as $n\to\infty$. Notice that $\phi_0$ also extends to a non-trivial solution $\bar\phi_0$ to the equation
\begin{\equ}\label{XX2}
D\bar\phi_0
=H_{max}|\bar\phi_0|^{2^*-2}\bar\phi_0
\quad \text{on } S^m,
\end{\equ}
by using the fact $\cl_{p_n}(\psi_n)=c_n\leq \hat a<2\tau_{2^*}$, we have
\[
H_{max}\int_{S^m}|\bar\phi_0|^{2^*}d\vol_{\ig_{S^m}}=
H_{max}\int_{\R^m}|\phi_0|^{2^*}d\vol_{\ig_{\R^m}}
<\frac2{(H_{max})^{m-1}}\big(\frac{m}2\big)^m\om_m.
\]

In the 2-dimensional case, we have $2^*=4$ and
\[
H_{max}\int_{S^2}|\bar\phi_0|^{4}d\vol_{\ig_{S^2}}=
H_{max}\int_{\R^2}|\phi_0|^{4}d\vol_{\ig_{\R^2}}
<\frac{8\pi}{H_{max}}.
\]
It follows from Proposition \ref{nodal prop} that $|\bar\phi_0|$ has no zero at all, and from the spinorial Weierstra\ss\ representation and Li-Yau's inequality that $(S^2, |\bar\phi_0|^4\ig_{S^2})$ is embedded into $\R^3$ with constant mean curvature $H_{max}$. Now, by the Alexandrov's theorem \cite{Alexandrov2}, we know that $(S^2, |\bar\phi_0|^4\ig_{S^2})$ must be a round sphere. And hence the Willmore energy satisfies
\[
H_{max}^2\int_{S^2}|\bar\phi_0|^{4}d\vol_{\ig_{S^2}}=4\pi.
\]
This and \eqref{energy id} imply
\[
c_n=\cl_{p_n}(\psi_n)=\frac{p_n-2}{2p_n}\int_{S^2}H|\psi_n|^{p_n}d\vol_{\ig_{S^2}}
=\frac{H_{max}}4\int_{\R^2}|\phi_0|^4d\vol_{\ig_{\R^2}}+o_n(1)<\hat b
\]
as $n\to\infty$, which is impossible. And therefore, the compactness of the solution sequence $\{\psi_n\}$ follows.
\end{proof}

\begin{Rem}
The hypersurfaces with constant mean curvature are much more complicated in higher dimensions, surprising examples have been shown in \cite{HTY}: there exist infinitely many distinct differentiable immersions of the $3$-sphere into Euclidean $4$-space having a given positive constant mean curvature, moreover, the total ``area" as well as the total integral of the norm of the second fundamental form of such examples can be as large as one wants. This makes the picture of $\bar\phi_0$ in Step 4 unclear to us when the dimension $m\geq3$, particularly we do not know whether or not $\bar\phi_0$ has the ground state energy. And it is also unclear if $\tau_{2^*}$ is an isolated critical level of the energy functional for Eq. \eqref{XX2}. Our approach in this regard up to now have failed.
\end{Rem}

\subsection{Application to 2-sphere in Euclidean 3-space}

Thanks to the previous section, we have the existence
result for the equation
\begin{\equ}\label{eeee}
D\psi=H|\psi|^{2}\psi \quad \text{on } S^2
\end{\equ}
provided $H$ is smooth and satisfy $(H)$.
Now let us give a interesting application to the problem
of prescribing mean curvature on the 2-sphere.

As a direct application of Proposition \ref{nodal prop}, we soon have

\begin{Cor}\label{existence positive solu}
On $(S^2,\ig_{S^2})$, if $H$ is a positive smooth
function satisfying criteria $(H)$, then there exists a solution of the equation
\[
D\psi=H|\psi|^2\psi, \quad
|\psi|>0.
\]
\end{Cor}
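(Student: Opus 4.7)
The plan is to deduce the corollary by directly combining the existence statement of Theorem \ref{main theorem1} in the case $m=2$ with the nodal set estimate from Proposition \ref{nodal prop}. Everything needed has already been established, so the proof will be short and amount to checking that the energy bound produced by Theorem \ref{main theorem1} is exactly the bound required to rule out zeros on the sphere.

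First I would invoke Theorem \ref{main theorem1} with $m=2$. Since $Q\in C^\infty(S^2)\subset C^2(S^2)$ is positive and satisfies $(Q)$, this produces a (weak, hence smooth by the regularity arguments referenced in Remark after Theorem \ref{main theorem3}) solution $\psi\in E$ of
\[
D\psi=Q(\xi)|\psi|^2\psi \quad\text{on }(S^2,\ig_{S^2})
\]
satisfying the energy estimate. Specialising the general bound to $m=2$, so that $(m/2)^m=1$ and $\om_2=4\pi$, yields
\[
\frac{4\pi}{Q_{max}}<\int_{S^2}Q(\xi)|\psi|^4\,d\vol_{\ig_{S^2}}<\frac{8\pi}{Q_{max}}.
\]
In particular $\psi$ is nontrivial.

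Next I would apply Proposition \ref{nodal prop} to $\psi$ on $M=S^2$, which has genus $\ga=0$. The hypothesis \eqref{energy control} of that proposition asks precisely that
\[
\int_{S^2}Q(\xi)|\psi|^4\,d\vol_{\ig_{S^2}}<\frac{8\pi}{Q_{max}},
\]
which is the upper bound just obtained. The conclusion of Proposition \ref{nodal prop} then gives $\psi^{-1}(0)=\emptyset$, i.e.\ $|\psi|>0$ pointwise, completing the proof.

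There is really no obstacle here beyond the verification that the two constants match: the output of Theorem \ref{main theorem1} has been tuned so that the Willmore-type bound $8\pi/Q_{max}$ used in the Gauss--Bonnet/B\"ar argument of Proposition \ref{nodal prop} is automatically satisfied. The only point worth emphasising in the write-up is that smoothness of $\psi$ (needed to invoke B\"ar's theorem via Proposition \ref{nodal prop}) follows from the regularity results for weak solutions of \eqref{Dirac3} cited in the paper, since $Q\in C^\infty(S^2)$.
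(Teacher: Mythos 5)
Your proposal is correct and coincides with the paper's own argument: the corollary is obtained exactly by taking the $m=2$ case of Theorem \ref{main theorem1} (where $(m/2)^m\om_m=4\pi$, giving the upper bound $8\pi/Q_{max}$) and feeding that energy bound into the genus-zero case of Proposition \ref{nodal prop} to exclude zeros, with smoothness supplied by the cited regularity results. Nothing further is needed.
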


Following the spinorial Weierstra\ss\ representation\cite[Theorem 13]{Friedrich1998}, we apply our existence reults now to the problem of prescribing mean curvature on $S^2$:

\begin{Cor}\label{S2 immersed}
On $(S^2,\ig_{S^2})$,
if $H$ is a positive smooth function satisfying
criteria $(H)$, then there is a conformal metric $\ig_1$
such that $(S^2,\ig_1)$  is isometrically immersed into
the Euclidean space $\R^3$ with mean curvature $H$.
\end{Cor}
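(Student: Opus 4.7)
The plan is to assemble Corollary~\ref{existence positive solu}, the conformal change formula for the Dirac operator (Proposition~\ref{conformal formula}), and the Spinorial Weierstra\ss\ representation (Theorem~\ref{Friedrich}) into a three-line argument. The serious analytical work has already been done in Sections~\ref{existence}--\ref{immersion sec}; what remains is a formal conformal-change computation.

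First, I would apply Corollary~\ref{existence positive solu} to obtain a smooth spinor field $\psi\in\Ga(\mbs(S^2))$ solving
\[
D\psi = Q(\xi)|\psi|^2\psi\quad\text{on }(S^2,\ig_{S^2}),\qquad |\psi|>0 \text{ everywhere}.
\]
Because $|\psi|$ is smooth and strictly positive, the conformal metric
\[
\ig_1 := |\psi|^4\ig_{S^2}\in[\ig_{S^2}]
\]
is a well-defined smooth Riemannian metric on $S^2$.

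Next I would transport $\psi$ to $\mbs(S^2,\ig_1)$. Writing $\ig_1 = f^2\ig_{S^2}$ with $f=|\psi|^2$, Proposition~\ref{conformal formula} (with $m=2$) supplies a fiberwise isometry $F:\mbs(S^2,\ig_{S^2})\to\mbs(S^2,\ig_1)$ satisfying
\[
D_{\ig_1}\bigl(F(\tilde\psi)\bigr) = F\bigl(f^{-3/2}D_{\ig_{S^2}}(f^{1/2}\tilde\psi)\bigr).
\]
Set $\va := F(|\psi|^{-1}\psi)$, so that $f^{1/2}\cdot(|\psi|^{-1}\psi) = \psi$. Since $F$ is a fiberwise isometry, $|\va|_{\ig_1}\equiv 1$. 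Plugging into the formula above and using the equation satisfied by $\psi$,
\[
D_{\ig_1}\va = F\bigl(|\psi|^{-3}D_{\ig_{S^2}}\psi\bigr) = F\bigl(|\psi|^{-3}Q(\xi)|\psi|^2\psi\bigr) = Q(\xi)\,F(|\psi|^{-1}\psi) = Q(\xi)\va.
\]

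Finally, since $S^2$ is simply connected its universal cover coincides with $S^2$, and Theorem~\ref{Friedrich} applies directly: the pair $(\va,Q)$ with $|\va|_{\ig_1}\equiv 1$ and $D_{\ig_1}\va=Q\va$ corresponds to an isometric immersion $(S^2,\ig_1)\to\R^3$ with mean curvature $Q$. There is no real obstacle at this stage; the difficulty was packaged entirely into Corollary~\ref{existence positive solu} (whose proof combines the existence part of Theorem~\ref{main theorem1} with the nodal-set bound from Proposition~\ref{nodal prop} that rules out zeros of $\psi$ under the energy control $\int_{S^2}Q|\psi|^4\,d\vol_{\ig_{S^2}}<8\pi/Q_{\max}$). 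Once $\psi$ is nowhere vanishing, the conformal rescaling $\ig_1=|\psi|^4\ig_{S^2}$ is admissible and the Weierstra\ss\ recipe goes through verbatim.
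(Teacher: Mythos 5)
Your proposal is correct and follows exactly the paper's own argument: take the nowhere-vanishing solution from Corollary \ref{existence positive solu}, pass to the conformal metric $\ig_1=|\psi|^4\ig_{S^2}$ via Proposition \ref{conformal formula} to produce a unit-length solution of $D_{\ig_1}\va=Q\va$, and invoke Theorem \ref{Friedrich} together with the simple connectedness of $S^2$. The only difference is that you write out the conformal-change computation explicitly, which the paper leaves implicit.
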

\begin{proof}
By Corollary \ref{existence positive solu}, we can introduce
a conformal metric $\ig_1=|\psi|^4\ig_{S^2}$ on $S^2$.
Then the conformal transformation formula implies that
there exists a spinor $\va_1$ on $(S^2,\ig_1)$ such that
\[
D_{\ig_1}\va_1=H\va_1 \quad \text{and} \quad
|\va_1|_{\ig_1}\equiv1.
\]
Hence by the spinorial Weierstra\ss\ representation, we have there is an isometric immersion $(\widetilde S^2,\ig)\to\R^3$ of the universal covering $\widetilde S^2$ into the Euclidean space $\R^3$ with mean
curvature $H$. Since $S^2$ is simply connected, we conclude the assertion immediately.
\end{proof}

\section{Appendix}
\subsection{Proof of Proposition \ref{alternative prop}}
The boundedness of $\{\psi_n\}$ follows from Lemma \ref{key2} (or one may follow \cite[Lemma 4.1]{BX}). We then assume without loss of generality that $\psi_n\rightharpoonup \psi_0$ in $E$ as $n\to\infty$. By \eqref{key-assumption}, it is not difficult to check that 
\[
\aligned
0&=\lim_{n\to\infty}\cl_{p_n}'(\psi_n)[\va]=\lim_{n\to\infty}\Big(\inp{\psi_n^+}{\va^+}-\inp{\psi_n^-}{\va^-}-\real\int_{S^m}H|\psi_n|^{p_n-2}(\psi_n,\va)d\vol_{\ig_{S^m}}\Big) \\
&=\inp{\psi_0^+}{\va^+}-\inp{\psi_0^-}{\va^-}-\real\int_{S^m}H|\psi_0|^{2^*-2}(\psi_0,\va)d\vol_{\ig_{S^m}}=\cl_{2^*}'(\psi_0)[\va]
\endaligned
\]
for all $\va\in E$. Hence $\psi_0$ turns out to be a solution of the critical equation \eqref{Dirac m}.

Set $\bar\psi_n=\psi_n-\psi_0$. We derive that
\[
\aligned
D\bar\psi_n&=H|\psi_n|^{p_n-2}\psi_n
-H|\bar\psi_n|^{p_n-2}\bar\psi_n
-H|\psi_0|^{p_n-2}\psi_0 \\[0.4em]
&\qquad +H|\psi_0|^{p_n-2}\psi_0
-H|\psi_0|^{2^*-2}\psi_0\\[0.4em]
&\qquad +H|\bar\psi_n|^{p_n-2}\bar\psi_n+o_n(1)
\endaligned
\]
where the above $o_n(1)$ term equals to $\cl_{p_n}'(\psi_n)$ as $n\to\infty$ in $E^*$. Denoted by
\[
\Phi_n=H|\psi_n|^{p_n-2}\psi_n
-H|\bar\psi_n|^{p_n-2}\bar\psi_n
-H|\psi_0|^{p_n-2}\psi_0,
\]
since $p_n\nearrow 2^*$, it turns out that there exists $C>0$ (independent of $n$) such that
\begin{\equ}\label{e4}
	|\Phi_n|\leq C|\bar\psi_n|^{p_n-2}|\psi_0|
	+C|\psi_0|^{p_n-2}|\bar\psi_n|.
\end{\equ}
Thanks to the Egorov theorem, for any $\epsilon>0$, there exists $\Om_\epsilon\subset S^m$ such that
$\meas\{S^m\setminus \Om_\epsilon\}<\epsilon$ and $\bar\psi_n\to0$ uniformly on $\Om_\epsilon$ as $n\to\infty$. Therefore, by the boundedness of $\psi_n$, \eqref{e4} and the H\"older inequality, we have
\begin{eqnarray}\label{integral1}
	\real\int_{S^m}(\Phi_n,\va)d\vol_{\ig_{S^m}}&=&
	\real\int_{S^m\setminus \Om_\epsilon}
	(\Phi_n,\va)d\vol_{\ig_{S^m}}
	+\real\int_{\Om_\epsilon}(\Phi_n,\va)d\vol_{\ig_{S^m}}
	\nonumber\\
	&\leq&C \Big[
	\Big(\int_{S^m\setminus \Om_\epsilon}
	|\psi_0|^{2^*}d\vol_{\ig_{S^m}}
	\Big)^{\frac1{2^*}}  
+\Big(\int_{S^m\setminus \Om_\epsilon}
	|\psi_0|^{2^*}d\vol_{\ig_{S^m}}
	\Big)^{\frac{p_n-2}{2^*}}\Big]
	  \nonumber\\
	& &+\int_{\Om_\epsilon}|\Phi_n|\cdot|\va|
	d\vol_{\ig_{S^m}}.
\end{eqnarray}
for arbitrary $\va\in E$ with $\|\va\|\leq1$. It is
evident that
the last integral in \eqref{integral1} converges to $0$ as
$n\to\infty$ and the remaining integrals tends to $0$
uniformly in $n$ as $\epsilon\to0$. Thus, we get
$\Phi_n=o_n(1)$ in $E^*$. Noting that
$q\mapsto H(\cdot)|\psi_0|^{q-2}\psi_0$ is continuous
in $E^*$,  we obtain
\begin{\equ}\label{eq1}
	\cl_{p_n}'(\bar\psi_n)=D\bar\psi_n-H|\bar\psi_n|^{p_n-2}\bar\psi_n=o_n(1)
	\quad \text{in } E^*.
\end{\equ}

Now assume $\psi_0\neq0$ in $E$ (otherwise we are done). Up to a subseqence, if $\cl_{p_n}(\bar\psi_n)\to0$ then, by repeating the proof in \cite[Lemma 4.1]{BX}, we have that  $\bar\psi_n\to0$ in $E$. This shows the compactness of $\{\psi_n\}$. In what follows, we assume that $\cl_{p_n}(\bar\psi_n)$ is bounded away from $0$.

Since $\psi_0$ is a non-trivial solution of Eq. \eqref{Dirac m}, we can see that
\begin{\equ}\label{X3}
\tau_{2^*}\leq\cl_{2^*}(\psi_0)=\cl_{2^*}(\psi_0)-\frac1{2^*}\cl_{2^*}'(\psi_0)[\psi_0]=\frac1{2m}\big( \|\psi_0^+\|^2-\|\psi_0^-\|^2 \big).
\end{\equ}
On the other hand, by \eqref{eq1} and  $\cl_{p_n}(\bar\psi_n)$ is bounded away from $0$, we derive that $\{\bar\psi_n\}$ satisfy the condition \eqref{contradiction-assumption} (where the upper bound of $\cl_{p_n}(\bar\psi_n)$ follows directly from the boundedness of $\{\bar\psi_n\}$). And hence, by applying Lemma \ref{key2}, we have
$\tau_{p_n}\leq\max_{t>0}I_{p_n}(t\bar\psi_n)\leq \cl_{p_n}(\bar\psi_n)+o_n(1)$.
This and \eqref{eq1} imply
\begin{\equ}\label{X4}
\tau_{p_n}\leq \cl_{p_n}(\bar\psi_n)-\frac1{p_n}\cl_{p_n}'(\bar\psi_n)[\bar\psi_n]+o_n(1)=\frac{p_n-2}{2p_n}\big( \|\bar\psi_n^+\|^2-\|\bar\psi_n^-\|^2 \big)+o_n(1).
\end{\equ}
Now, it follows from \eqref{X3}, \eqref{X4} and Lemma \ref{tau p properties} that
\[
\aligned
\cl_{p_n}(\psi_n)&=\cl_{p_n}(\psi_n)-\frac1{p_n}\cl_{p_n}'(\psi_n)[\psi_n]+o_n(1)=\frac{p_n-2}{2p_n}
\big( \|\psi_n^+\|^2-\|\psi_n^-\|^2\big) + o_n(1)\\
&=\frac{p_n-2}{2p_n}
\big( \|\bar\psi_n^+\|^2-\|\bar\psi_n^-\|^2\big)
+\frac{p_n-2}{2p_n}
\big(\|\psi_0^+\|^2-\|\psi_0^-\|^2\big)
+ o_n(1) \\
&\geq 2\tau_{2^*}+o_n(1)
\endaligned
\]
which contradicts \eqref{key-assumption}. Therefore, we must have  $\cl_{p_n}(\bar\psi_n)\to0$ as $n\to\infty$, and the proof is hereby completed.

\subsection{Proof of Proposition \ref{blow-up prop}}

We follow the steps in \cite[Section~5]{Isobe-JFA}, with  necessary modifications.

To begin with, for some $\de_0>0$, we introduce the singular set of $\{\psi_n\}$:
\[
\Ga:=\Big\{
a\in S^m:\, \varliminf_{r\to0}\varliminf_{n\to\infty}
\int_{B_r(a)}|\psi_n|^{p_n}d\vol_{\ig_{S^m}}\geq \de_0
\Big\}
\]
where $B_r(a)\subset S^m$ is the distance ball of radius $r$
with respect to the metric $\ig_{S^m}$. As was proved in \cite[Lemma 5.3]{Isobe-JFA} (the proof can be repeated  just adapting the notations in the present paper) that there exists $\de_0>0$ depending only on the geometry of $S^m$ such that $\Ga\neq\emptyset$.

In order to have a clearer picture of $\psi_n$ near points in $\Ga$, for $r\geq0$, we define
\[
\Theta_n(r)=\sup_{a\in S^m}\int_{B_r(a)}|\psi_n|^{p_n}
d\vol_{\ig_{S^m}},
\]
which can be understood as a concentration function for $\psi_n$. Then, by choosing $\bar\de>0$ small (say $\bar\de<\de_0$), there
exist a decreasing sequence $R_n\searrow0$
as $n\to\infty$ and $\{a_n\}\subset S^m$ such that
\begin{\equ}\label{choice of an}
	\Theta_n(R_n)=\int_{B_{R_n}(a_n)}|\psi_n|^{p_n}
	d\vol_{\ig_{S^m}}=\bar\de.
\end{\equ}
Up to a subsequence if necessary, we assume that
$a_n\to a\in S^m$ as $n\to\infty$.

Recall that we have defined the rescaled geodesic normal coordinates near each $a_n$ by $\mu_n(x)=\exp_{a_n}(R_nx)$ for $x\in\R^m$. Then we have a conformal equivalence $\R^m\supset(B_R^0, \, \ig_n)\cong (B_{R_nR}(a_n),\, \ig_{S^m})\subset S^m$ for all large $n$, where $\ig_n=R_n^{-2}\mu_n^*\ig_{S^m}$. And, for any $R>0$, $\ig_n$ converges to the Euclidean metric in $C^\infty(B_R^0)$ as $n\to\infty$. 

Following the idea of local trivialization introduced
in \cite{BG}, the coordinate map $\mu_n$ induces a spinor identification $\ov{(\mu_n)}_*:\mbs_x(B_R^0,\ig_n)\to
\mbs_{\mu_n(x)}(B_{R_nR}(a_n),\ig_{S^m})$. In the sequel, we define spinors $\phi_n$ on $B_R^0$ by
$\phi_n=R_n^{\frac{m-1}2}\ov{(\mu_n)}_*^{\,-1}\circ\psi_n\circ\mu_n$. By the conformal changes of the Dirac operator, a straightforward calculation shows that
$
	D_{\ig_n}\phi_n=R_n^{\frac{m+1}2}\ov{(\mu_n)}_*^{\,-1}
	\circ(D\psi_n)\circ\mu_n
$,
\begin{\equ}\label{c2}
	\int_{B_R^0}(D_{\ig_n}\phi_n,\phi_n)d\vol_{\ig_n}
	=\int_{B_{R_nR}(a_n)}(D\psi_n,\psi_n)d\vol_{\ig_{S^m}},
\end{\equ}
\begin{\equ}\label{c3}
	\int_{B_R^0}|\phi_n|^{2^*}d\vol_{\ig_n}=
	\int_{B_{R_nR}(a_n)}|\psi_n|^{2^*}d\vol_{\ig_{S^m}},
\end{\equ}
and
\begin{\equ}\label{c4}
	\int_{B_R^0}|\phi_n|^{p_n}d\vol_{\ig_n}
	=R_n^{-\frac{m-1}2(2^*-p_n)}
	\int_{B_{R_nR}(a_n)}|\psi_n|^{p_n}d\vol_{\ig_{S^m}}.
\end{\equ}
Moreover, since $\{\psi_n\}$ is bounded in $E$, we have
\begin{\equ}\label{c5}
	\sup_{n\geq1}\int_{B_R^0}|\phi_n|^{2^*}d\vol_{\ig_n}\leq
	\sup_{n\geq1}\int_{S^m}|\psi_n|^{2^*}d\vol_{\ig_{S^m}}
	<+\infty
\end{\equ}
for any $R>0$.

\begin{Lem}\label{concentration parameter}
	There is $\bar\lm>0$ such that	$\displaystyle	\bar\lm\leq\liminf_{n\to\infty}R_n^{\frac{m-1}2(2^*-p_n)}	\leq\limsup_{n\to\infty}R_n^{\frac{m-1}2(2^*-p_n)}\leq1$.
\end{Lem}
\begin{proof}
It follows from \eqref{choice of an}, \eqref{c4} and H\"older inequality that
	\[
	\bar\de=\int_{B_{R_n}(a_n)}|\psi_n|^{p_n}
	d\vol_{\ig_{S^m}}
	\leq\Big( \int_{B_1^0}|\phi_n|^{2^*}d\vol_{\ig_n}
	\Big)^{\frac{p_n}{2^*}}\Big( \int_{B_1^0}d\vol_{\ig_n}
	\Big)^{\frac{2^*-p_n}{2^*}}
	R_n^{\frac{m-1}2(2^*-p_n)}.
	\]
	Noting that $\ig_n$ converges to the Euclidean metric
	in the $C^\infty$-topology on $B_1^0$, we can conclude
	immediately from $p_n\nearrow2^*$ and \eqref{c5} that
	$
	\bar\de\leq C \cdot R_n^{\frac{m-1}2(2^*-p_n)}
	$
	for some constant $C>0$.
	
	On the other hand, suppose there exists some $\de>0$
	such that $R_n^{\frac{m-1}2(2^*-p_n)}\geq 1+\de$
	for all large $n$. Then, we must have
	$	\ln R_n\geq \frac{2\ln(1+\de)}{(m-1)(2^*-p_n)}\to+\infty$ as  $n\to\infty$.	This implies $R_n\to+\infty$ which is absurd.
\end{proof}

Let $\phi_n$ be as above and define $\bar L_n=D_{\ig_n}\phi_n-R_n^{\frac{m-1}2(2^*-p_n)}
H\circ\mu_n(\cdot)|\phi_n|^{p_n-2}\phi_n$. By using $\cl_{p_n}'(\psi_n)\to0$ in $E^*$ and the conformal changes of the Dirac operator, it is not difficult to check that  $\bar L_n\to0$ in $H^{-\frac12}_{\loc}(\R^m,\mbs_m)$.

In what follows, by Lemma \ref{concentration parameter},
we may assume that (up to a subsequence if necessary) $R_n^{\frac{m-1}2(2^*-p_n)}\to
\lm\in[\bar\lm,1]$ as $n\to\infty$. Since $\{\phi_n\}$ is bounded in $H_{\loc}^{\frac12}
(\R^m,\mbs_m)$, we can assume that
$\phi_n\rightharpoonup\phi_0$ in $H_{\loc}^{\frac12}
(\R^m,\mbs_m)$. And, by \eqref{c5}, it is easy to see that $\phi_0\in L^{2^*}(\R^m,\mbs_m)$ and
satisfies the equation
\begin{\equ}\label{limit equ}
D_{\ig_{\R^m}}\phi_0=\lm H(a)|\phi_0|^{2^*-2}\phi_0
\quad \text{on } \R^m.
\end{\equ}

At this point, we may apply the local analysis in \cite[Lemma 5.5]{Isobe-JFA} to see that $\phi_n\to\phi_0$ in $H_{\loc}^{\frac12}(\R^m,\mbs_m)$
as $n\to\infty$, where proof is based on the fact that $\ig_n$ converges to $\ig_{\R^m}$ in $C^\infty$-topology on bounded domains in $\R^m$. Then, by Lemma \ref{concentration parameter}, \eqref{choice of an} and \eqref{c4}, we have
\[
\lm\int_{B_1^0}|\phi_0|^{2^*}d\vol_{\ig_{\R^m}}=\bar\de,
\]
which implies $\phi_0$ is a non-trivial solution of Eq. \eqref{limit equ}. And hence, by the energy gap estimate in \cite[Section 4]{Isobe-JFA}, we can derive via standard rescaling argument that
\begin{\equ}\label{blow-up energy}
	\int_{\R^m}|\phi_0|^{2^*}dx
	\geq\frac{1}{(\lm H(a))^{m}}\big(\frac m2\big)^m \om_m.
\end{\equ}

With these preparations in hand, we may now choose
$\eta\in C^\infty(S^m)$ be such that $\eta\equiv1$ on
$B_r(a)$ and $\supp \eta\subset B_{2r}(a)$ for some
$r>0$ (for sure $r$ should not be large in the sense
that we assume $3r<inj_{S^m}$
where $inj_{S^m}$ denotes
the injective radius) and define a spinor field $z_n\in
C^\infty(S^m,\mbs(S^m))$ by $z_n=R_n^{-\frac{m-1}2}\eta(\cdot)\ov{(\mu_n)}_*
\circ\phi_0\circ \mu_n^{-1}$.
Setting $\va_n=\psi_n-z_n$, we have

\begin{Lem}\label{z-n weakly 0}
	\begin{itemize}
	\item[$(1)$] $\va_n\rightharpoonup 0$ in $E$ as $n\to\infty$.
	
	\item[$(2)$]$\cl_{p_n}'(z_n)\to0$ and $\cl_{p_n}'(\va_n)\to0$ as
	$n\to\infty$.
	\end{itemize}
\end{Lem}
\begin{proof}
The proof of $(1)$ is accomplished by showing $|z_n|_2\to0$ as $n\to\infty$ since we already have assumed $\psi_n\rightharpoonup 0$ in $E$. To see this, we split the $L^2$-integral of $z_n$ into two parts 
\[
\Big(\int_{B_{R_nR}(a_n)}+\int_{S^m\setminus B_{R_nR}(a_n)}\Big)|z_n|^2d\vol_{\ig_{S^m}}
\]
where 
\[
\int_{B_{R_nR}(a_n)}|z_n|^2d\vol_{\ig_{S^m}}
=R_n^{-m+1}\int_{B_{R_nR}^0}|\phi_0|^2d\vol_{\mu_n^*\ig_{S^m}}
=R_n\int_{B_R^0}|\phi_0|^2d\vol_{\ig_n}\to0
\]
and
\[
\int_{S^m\setminus B_{R_nR}(a_n)}|z_n|^2d\vol_{\ig_{S^m}}
\leq C \Big( \int_{B_{3r/R_n}^0\setminus B_{R}^0}
|\phi_0|^{2^*} d\vol_{\ig_{\R^m}} \Big)^{\frac2{2^*}}
\big( (3r)^m-(R_nR)^m \big)\to0
\]
for some constant $C>0$. We omit the proof of $(2)$ here, since it is standard and is similar to that of  \cite[Lemma 5.7]{Isobe-JFA}. 
\end{proof}

\begin{proof}[Proof of Proposition \ref{blow-up prop}: The concentration behavior]
	By $(2)$ of Lemma \ref{z-n weakly 0}, we have
	\[
	\cl_{p_n}(z_n)+o_n(1)=
	\cl_{p_n}(z_n)-\frac1{p_n}\cl_{p_n}'(z_n)[z_n]=
	\frac{p_n-2}{2p_n}\big( \|z_n^+\|^2-\|z_n^-\|^2 \big)
	\]
	and
	\[
	\cl_{p_n}(\va_n)+o_n(1)=
	\cl_{p_n}(\va_n)-\frac1{p_n}\cl_{p_n}'(\va_n)[\va_n]=
	\frac{p_n-2}{2p_n}\big( \|\va_n^+\|^2-\|\va_n^-\|^2 \big).
	\]
	We claim that
	\begin{claim}\label{claim1}
		$\cl_{p_n}(\psi_n)=\cl_{p_n}(z_n)+\cl_{p_n}(\va_n)
		+o_n(1)$ as $n\to\infty$.
	\end{claim}
	\noindent
	Assuming Claim \ref{claim1} for the moment, then we are going to show that $\cl_{p_n}(\va_n)\to0$ as $n\to\infty$. Indeed,
	suppose to the contrary that (up to a subsequence)
	$\cl_{p_n}(\va_n)\geq c>0$, it follows from the
	boundedness of $\{\va_n\}$ in $E$, Lemma \ref{key2} and \ref{tau p properties} that
	\begin{\equ}\label{blow-up-cl1}
	\tau_{2^*}=\tau_{p_n}+o_n(1)\leq \max_{t>0}I_{p_n}(t\va_n^+)+o_n(1)\leq \cl_{p_n}(\va_n) + o_n(1).
	\end{\equ}
	On the other hand,  we have
	\begin{eqnarray*}
		\cl_{p_n}(z_n)&=&\cl_{p_n}(z_n)-\frac12\cl_{p_n}'(z_n)[z_n]=\frac{p_n-2}{2p_n}
		\int_{S^m}H|z_n|^{p_n}d\vol_{\ig_{S^m}}+o_n(1)
		\\[0.4em]
		&=&\frac{p_n-2}{2p_n} R_n^{\frac{m-1}2(2^*-p_n)}
		\int_{B_R^0}(H\circ\mu_n)|\phi_0|^{p_n} d\vol_{\ig_n}
		+o_n(1)+o_R(1)\\[0.4em]
		&=&\frac1{2m} \lm H(a)\int_{B_R^0}|\phi_0|^{2^*}
		d\vol_{\ig_{\R^m}} + o_n(1) + o_R(1)
	\end{eqnarray*}
for $R>0$ large. Thus, by \eqref{tau value},
\eqref{blow-up energy},
$\lm\leq1$ and $H(a)\leq H_{max}$, we obtain
\begin{\equ}\label{blow-up-cl2}
	\cl_{p_n}(z_n)\geq \frac1{2m(\lm H(a))^{m-1}}
	\big(\frac m2\big)^m\om_m
	+o_n(1)\geq\tau_{2^*}+o_n(1).
\end{\equ}
Combining Claim \ref{claim1}, \eqref{blow-up-cl1} and
\eqref{blow-up-cl2}, we obtain $\cl_{p_n}(\psi_n)
\geq2\tau_{2^*}+o_n(1)$
which contradicts to \eqref{key-assumption}. Therefore, we have $\cl_{p_n}(\va_n)\to0$ as $n\to\infty$
and this, together with $\cl_{p_n}'(\va_n)\to0$, implies
$\va_n\to0$ in $E$ as $n\to\infty$ (follow the proof of \cite[Lemma 4.1]{BX}). Moreover, we can
get a lower bound for $\lm$, that is $\lm>2^{-\frac1{m-1}}$, since $H(a)\leq H_{max}$
and $\cl_{p_n}(\psi_n)<2\tau_{2^*}$.

\medskip

Now it remains to prove Claim \ref{claim1}. We point out here that this is equivalent to show
\begin{\equ}\label{claim1-1}
	\int_{S^m}(D\psi_n,\psi_n)d\vol_{\ig_{S^m}}=
	\int_{S^m}(Dz_n,z_n)d\vol_{\ig_{S^m}}+
	\int_{S^m}(D\va_n,\va_n)d\vol_{\ig_{S^m}}+o_n(1).
\end{\equ}
And since $\va_n=\psi_n-z_n$, it suffices to prove
$\int_{S^m}(Dz_n,\va_n)d\vol_{\ig_{S^m}}=o_n(1)$
as $n\to\infty$. In fact, for arbitrary $R>0$, we have
\[
\aligned
\int_{S^m}(Dz_n,\va_n)d\vol_{\ig_{S^m}}
&=\Big(\int_{B_{R_nR}(a_n)}
+\int_{B_{3r}(a_n)\setminus B_{R_nR}(a_n)}\Big)
(Dz_n,\va_n)d\vol_{\ig_{S^m}}  \\[0.4em]
&=\Big(\int_{B_R^0}+\int_{B_{3r/R_n}^0\setminus B_R^0}\Big)
(D_{\ig_n}\phi_0,\phi_n-\phi_0) d\vol_{\ig_n},
\endaligned
\]
where the first integral goes to $0$ as $n\to\infty$ since $\phi_n\to\phi_0$ in $H_{\loc}^{\frac12}(\R^m,\mbs_m)$. Meanwhile to estimate the second integral,
we first observe that (through the conformal transformation)
\[
\sup_{n}\int_{B_{3r/R_n}^0}|\phi_n-\phi_0|^{2^*}
d\vol_{\ig_{\R^m}}\leq \sup_n\int_{B_{3r}(a_n)}
|\psi_n-z_n|^{2^*}d\vol_{\ig_{S^m}}<+\infty
\]
Thus, by $d\vol_{\ig_n}\leq
C d\vol_{\ig_{\R^m}}$, we have
\[
	\Big|\int_{B_{3r/R_n}^0\setminus B_R^0}
	(D_{\ig_n}\phi_0,\phi_n-\phi_0) d\vol_{\ig_n}\Big|
	\leq C\Big( \int_{B_{3r/R_n}^0\setminus B_R^0}
	|\nabla \phi_0|^{\frac{2m}{m+1}}d\vol_{\ig_{\R^m}}
	\Big)^{\frac{m+1}{2m}}\to0
\]
as $R\to\infty$. Therefore we obtain \eqref{claim1-1}
is valid.
\end{proof}

In the above, we proved the concentration behavior $\psi_n=R_n^{-\frac{m-1}2}\eta(\cdot)\ov{(\mu_n)}_*
\circ\phi_0\circ \mu_n^{-1}+o_n(1)$ in $E$ and  the lower estimate for $\cl_{p_n}(\psi_n)$. It remains to locate the blow-up point $a\in S^m$ and to evaluate the parameter $\lm\in[\bar\lm,1]$ that are involved in the analysis.

For arbitrary $\xi\in S^m$, we embed
$S^m$ into $\R^{m+1}$ in the way that its antipodal point $-\xi$ is the North pole. Denoting $\cs_\xi$
the stereographic projection from $S^m\setminus\{-\xi\}$
to $\R^m$, we have $\cs_\xi(\xi)=0$. Particularly,
$S^m\setminus\{-\xi\}$ and $\R^m$ are conformally
equivalent, where $(\cs_\xi^{-1})^*\ig_{S^m}=f^2\ig_{\R^m}$ with
$f(x)=\frac2{1+|x|^2}$.

Let $\iota:\mbs\big(\R^m, (\cs_\xi^{-1})^*\ig_{S^m}\big)\to
\mbs(\R^m,\ig_{\R^m})$ denote the isomorphism of spinor bundles induced by Proposition \ref{conformal formula}. If $\psi\in E$ is a
solution to Eq. \eqref{nld-p} for some $p\in(2,2^*]$, then
$\phi:=\iota(f^{\frac{m-1}2}\psi\circ\cs_\xi^{-1})$ will
satisfy the transformed equation
$D_{\ig_{\R^m}}\phi=f^{\frac{m-1}2(2^*-p)}
(H\circ \cs_{\xi}^{-1})|\phi|^{p-2}\phi$ on $(\R^m,\ig_{\R^m})$.

Let $\psi_n$ be as before, and now we require further that $\cl_{p_n}'(\psi_n)\equiv0$. By the regularity results proved in \cite{Ammann}, we have $\psi_n$ is of $C^{1,\al}$ for some $\al\in(0,1)$
and are classical solutions to \eqref{equs-n}. The proof of Proposition \ref{blow-up prop} will be accomplished by collecting following two results.

\begin{Lem}\label{blow-up prop2}
 Let $a\in S^m$ be the associate blow-up point. Then $\nabla H(a)=0$.
\end{Lem}
\begin{Lem}\label{blow-up cor}
	Let $\lm\in[\bar\lm,1]$ be the parameter involved in \eqref{limit equ}. Then $\lm=1$.
\end{Lem}

\begin{proof}[Proof of Lemma \ref{blow-up prop2}]
	Let us consider the stereographic projection
	$\cs_a: S^m\setminus\{-a\}\to\R^m$ and the
	associated bundle isomorphism
	$\iota:\mbs\big(\R^m, (\cs_a^{-1})^*\ig_{S^m}\big)\to
	\mbs(\R^m,\ig_{\R^m})$. Denoted by
	$\tilde\phi_n= \iota(f^{\frac{m-1}2}\psi_n\circ \cs_a^{-1})$,
	we have that $\tilde\phi_n$ satisfies
	\begin{\equ}\label{tilde phi-n}
		D_{\ig_{\R^m}}\tilde\phi_n=H_n(x)|\tilde\phi_n|^{p_n-2}\tilde\phi_n
		\quad \text{on } (\R^m,\ig_{\R^m}).
	\end{\equ}
where, for ease of notation, we denote $H_n(x):=f(x)^{\frac{m-1}2(2^*-p_n)}
(H\circ \cs_a^{-1})(x)$.

Take $\bt\in C_c^\infty(S^m)$ be a cut-off function on
$S^m$ such that $\bt\equiv1$ on $B_{2r}(a)$ and
$\supp\bt\subset B_{3r}(a)$ where $r>0$ is the fixed radius in the definition of $z_n$. Then we are allowed
to multiply \eqref{tilde phi-n} by
$\phi_{n,k}=\pa_k\big((\bt\circ \cs_a^{-1})\tilde\phi_n \big)$
as a test spinor for each $k=1,2,\dots,m$, and
consequently 
\begin{\equ}\label{identity}
	\real\int_{\R^m}(D_{\ig_{\R^m}}\tilde\phi_n,
	\phi_{n,k}  )
	d\vol_{\ig_{\R^m}} 
=\real\int_{\R^m}
H_n	|\tilde\phi_n|^{p_n-2}(\tilde\phi_n,  \,
	\phi_{n,k}  )
	d\vol_{\ig_{\R^m}}.
\end{\equ}

Note that $(\bt\circ \cs_a^{-1})\tilde\phi_n$ has a
compact support, we may integrate by parts to get
\begin{eqnarray}\label{I1}
	0&=&\real\int_{\R^m}\pa_k\big(
	D_{\ig_{\R^m}}\tilde\phi_n,
	(\bt\circ\cs_a^{-1})\tilde\phi_n \big) d\vol_{\ig_{\R^m}}
	\nonumber \\
	&=&2\,\real\int_{\R^m}(D_{\ig_{\R^m}}\tilde\phi_n,
	\,\phi_{n,k} )
	d\vol_{\ig_{\R^m}}  
	+\real\int_{\R^m}\big(\pa_k\tilde\phi_n,
	\nabla(\bt\circ\cs_a^{-1})\cdot_{\ig_{\R^m}}\tilde\phi_n
	\big) d\vol_{\ig_{\R^m}}  \nonumber\\
	& &\quad -\,\real\int_{\R^m}\big( D_{\ig_{\R^m}}\tilde\phi_n,  \,
	\pa_k(\bt\circ\cs_a^{-1})\tilde\phi_n \big)
	d\vol_{\ig_{\R^m}},
\end{eqnarray}
where $\cdot_{\ig_{\R^m}}$ denotes the Clifford
multiplication with respect to $\ig_{\R^m}$.
Now let us evaluate the last two integrals of the
previous equality. First of all we see
from the conformal transformation and the regularity
results (see \cite{Ammann}) that
$\{\nabla\tilde\phi_n\}$ is uniformly bounded in $L^{\frac{2m}{m+1}}(\R^m,\mbs_m)$.
Hence, by the concentration behavior of $\psi_n$, we derive
\[
\Big| \int_{\R^m}\big(\pa_k\tilde\phi_n,
\nabla(\bt\circ\cs_a^{-1})\cdot_{\ig_{\R^m}}\tilde\phi_n
\big) d\vol_{\ig_{\R^m}} \Big|
\leq
C\Big( \int_{B_{3r}(a)\setminus B_{2r}(a)}
|\psi_n|^{2^*}
d\vol_{\ig_{S^m}} \Big)^{\frac1{2^*}}  \to0
\]
as $n\to\infty$. And analogously, we have
$
\big| \int_{\R^m}\big( D_{\ig_{\R^m}}\tilde\phi_n,  \,
\pa_k(\bt\circ\cs_a^{-1})\tilde\phi_n \big)
d\vol_{\ig_{\R^m}} \big|\to0
$
as $n\to\infty$. Thus, we conclude from \eqref{I1}
that
\begin{\equ}\label{I2}
	\real\int_{\R^m}\big(D_{\ig_{\R^m}}\tilde\phi_n,
	\,\phi_{n,k} \big)
	d\vol_{\ig_{\R^m}}=o_n(1) \quad \text{as } n\to\infty.
\end{\equ}

On the other hand, to evaluate the integral in the right side of \eqref{identity}, we first observe that
\begin{eqnarray}\label{I3}
	0&=&\int_{\R^m}\pa_k\big[ H_n\cdot (\bt\circ\cs_a^{-1})|\tilde\phi_n|^{p_n}
	\big]d\vol_{\ig_{\R^m}}  \nonumber\\
	&=&\int_{\R^m}
	\pa_kH_n\cdot(\bt\circ\cs_a^{-1})|\tilde\phi_n|^{p_n}
	d\vol_{\ig_{\R^m}} +p_n\real\int_{\R^m}
	H_n
	|\tilde\phi_n|^{p_n-2}(\tilde\phi_n,  \phi_{n,k})
	d\vol_{\ig_{\R^m}}   \nonumber\\
	& &\quad -\,(p_n-1)\int_{\R^m}
	H_n\cdot\pa_k
	(\bt\circ\cs_a^{-1})|\tilde\phi_n|^{p_n}
	d\vol_{\ig_{\R^m}}.
\end{eqnarray}
It is evident that the last integral converges to $0$ as
$n\to\infty$, since $\pa_k
(\bt\circ\cs_a^{-1})$ vanishes on $B_{2r}^0$. We only need to estimate the first integral. Notice that $f(x)=\frac2{1+|x|^2}$
and $\bt\circ\cs_a^{-1}$ has a compact
support on $\R^m$, we see that $H_n$, $H_n^{-1}$ and $\nabla H_n$ are bounded uniformly on $\supp(\bt\circ\cs_a^{-1})$. Hence, due to the concentration behavior of $\tilde\phi_n$, we obtain
\[
\int_{\R^m}
\pa_kH_n\cdot(\bt\circ\cs_a^{-1})|\tilde\phi_n|^{p_n}
d\vol_{\ig_{\R^m}} = \pa_kH_n(0)\int_{\R^m}
|\tilde\phi_n|^{p_n}
d\vol_{\ig_{\R^m}} +o_n(1)
\]
as $n\to\infty$. This and \eqref{I3} imply
\begin{\equ}\label{I4}
	\real\int_{\R^m}
	H_n
	|\tilde\phi_n|^{p_n-2}(\tilde\phi_n,  \phi_{n,k})
	d\vol_{\ig_{\R^m}}=-\frac1{p_n}\pa_kH_n(0)\int_{\R^m}
	|\tilde\phi_n|^{p_n}
	d\vol_{\ig_{\R^m}} +o_n(1).
\end{\equ}

Combining \eqref{identity}, \eqref{I2} and \eqref{I4},
we conclude that
\begin{\equ}\label{identity2}
	\pa_kH_n(0)\int_{\R^m}
	|\tilde\phi_n|^{p_n} d\vol_{\ig_{\R^m}}= o_n(1)
\end{\equ}
as $n\to\infty$. Since we already know from the blow-up analysis that
\[
\lim_{n\to\infty}\int_{\R^m}f^{\frac{m-1}2(2^*-p_n)}
|\tilde\phi_n|^{p_n}d\vol_{\ig_{\R^m}}
=\lim_{n\to\infty}\int_{S^m}
|\psi_n|^{p_n}d\vol_{\ig_{S^m}}>0,
\]
\eqref{identity2} tells us nothing but
$\pa_kH_n(0)\equiv0$. Notice that $\nabla f(0)=0$ and
$k$ can be varying from $1$ to $m$, we have
$\nabla(H\circ\cs_a^{-1})(0)=0$, i.e. $\nabla H(a)=0$
which completes the proof.
\end{proof}

\begin{proof}[Proof of Lemma \ref{blow-up cor}]
	The proof is very similar to that of Lemma \ref{blow-up prop2}. Let us recall the equation under stereographic projection \eqref{tilde phi-n} and consider the conformal transformation of $\tilde\phi_n$ by $
	\tilde\phi_{n,R}(x)=R^{\frac{m-1}2}\tilde\phi_n(Rx)$  for $R>0$. Then we have
	\begin{\equ}\label{eq-tilde-n-r}
		D_{\ig_{\R^m}}\tilde\phi_{n,R}=R^{\frac{m-1}2(2^*-p_n)}\widehat H_{n,R}|\tilde\phi_{n,R}|^{p_n-2}\tilde\phi_{n,R} \quad \text{on } \R^m
	\end{\equ}
	where, for ease of notations, we have denoted $\widehat H_{n,R}(x)=f^{\frac{m-1}2(2^*-p_n)}(Rx)\cdot
	H\circ \cs_a^{-1}(Rx)$. 
	
	Let $\bt\in C_c^\infty(S^m)$ be the same cut-off function as in \eqref{identity}, and set $
	\hat\phi_{n,R}(x)=\bt\circ\cs_a^{-1}(Rx)\cdot \tilde\phi_{n,R}(x)$. Then we can multiply \eqref{eq-tilde-n-r} by $\hat\phi_n^*=\frac{\pa}{\pa R}\big|_{R=R_n}\hat\phi_{n,R}$ (where $R_n>0$ was determined in \eqref{choice of an}) to get
	\[
		\real\int_{\R^m}(D_{\ig_{\R^m}}\tilde\phi_{n,R_n},\hat\phi_n^*)d\vol_{\ig_{\R^m}}=R_n^{\frac{m-1}2(2^*-p_n)}\real\int_{\R^m}\widehat H_{n,R_n}|\tilde\phi_{n,R_n}|^{p_n-2}(\tilde\phi_{n,R_n},\hat\phi_n^*)d\vol_{\ig_{\R^m}}.
	\]
	In what follows, we are going to estimate the above two integrals.
	
	Via the conformal transformation property of the Dirac operator, we see that
	\begin{\equ}\label{id1}
		\int_{\R^m}(D_{\ig_{\R^m}}\tilde\phi_{n,R},\hat\phi_{n,R})d\vol_{\ig_{\R^m}}
		\equiv\int_{S^m}(D\psi_n,\bt\psi_n) d\vol_{\ig_{S^m}}
	\end{\equ}
	and
	\begin{\equ}\label{id2}
		\int_{\R^m}(\bt\circ\cs_a^{-1})(Rx)\cdot\widehat H_{n,R}|\tilde\phi_{n,R}|^{p_n}d\vol_{\ig_{\R^m}}\equiv R^{\frac{m-1}2(p_n-2^*)}
		\int_{S^m}\bt H|\psi_n|^{p_n}d\vol_{\ig_{S^m}}
	\end{\equ}
for all $R>0$.	Hence, by taking derivative with respect to $R$ in \eqref{id1} and \eqref{id2}, and estimate similar as in the proof of Lemma \ref{blow-up prop2}, we can obtain
	\[
	(2^*-p_n)R_n^{-1}\int_{S^m}\bt H|\psi_n|^{p_n}d\vol_{\ig_{S^m}}=O_n(1) \quad \text{as } n\to\infty.
	\]
	Since the concentration behavior suggests that $\lim_{n\to\infty}\int_{S^m}\bt H|\psi_n|^{p_n}d\vol_{\ig_{S^m}}>0$, we find
	$2^*-p_n=O(R_n)$ as $n\to\infty$. Therefore we derive
	\[
	\lm=\lim_{n\to\infty}R_n^{\frac{m-1}2(2^*-p_n)}=\lim_{n\to\infty}e^{O(1)R_n\ln R_n}=1.
	\]
	This completes the proof.
\end{proof}

\vspace{2mm}
{\sc Tian Xu\\
 Center for Applied Mathematics, Tianjin University\\
 Tianjin, 300072, China}\\
 xutian@amss.ac.cn

\end{document}